\documentclass[11pt]{article}
\usepackage{amsfonts, amsmath, amssymb, amscd, amsthm, color, graphicx, mathrsfs, mathabx, wasysym, setspace, mdwlist, calc,float}
\usepackage{setspace}
\usepackage{hyperref}
\usepackage{tikz-cd}
 \hoffset -1.35cm \voffset -1.2cm \textwidth=6in \textheight=8in
 \tolerance=9000 \emergencystretch=5pt \vfuzz=2pt
 \parskip=1.5mm

 \unitlength=1mm

\usepackage{hyperref}
\usepackage{tocloft}
\setlength{\cftbeforesecskip}{8pt}
\setlength{\cftbeforesubsecskip}{3pt}

\usepackage{xcolor}

\usepackage{hyperref}
\hypersetup{linktocpage}

\hypersetup{colorlinks,
    linkcolor={red!50!black},
    citecolor={blue!80!black},
    urlcolor={blue!80!black}}
\usepackage{float}

\renewcommand{\phi}{\varphi}
\newcommand{\e}{\varepsilon}
\newcommand{\NN}{\mathbb{N}}

\newcommand{\ZZ}{\mathbb{Z}}

\renewcommand{\d}{{\rm d}}
\renewcommand{\ll }{\langle\hspace{-.7mm}\langle }
\newcommand{\rr }{\rangle\hspace{-.7mm}\rangle }

\newcommand{\Aut}{\operatorname{Aut}}
\newcommand{\Inn}{\operatorname{Inn}}
\newcommand{\Out}{\operatorname{Out}}

\newcommand{\Ker}{\operatorname{Ker}}

\newtheorem{thm}{Theorem}[section]
\newtheorem{cor}[thm]{Corollary}
\newtheorem{lem}[thm]{Lemma}
\newtheorem{prop}[thm]{Proposition}
\newtheorem{q}[thm]{Question}

\theoremstyle{definition}
\newtheorem{defn}[thm]{Definition}

\theoremstyle{remark}
\newtheorem{rem}[thm]{Remark}

\renewcommand{\L}{\mathcal{L}}

\newcommand{\Mod}{{\rm Mod}}

\newcommand{\WR}{\mathcal{WR}}
\newcommand{\G}{\mathcal{G}}

\begin{document}

\title{$Out(F_n)$-invariant probability measures on the space of $n$-generated marked groups}
\date{}
\author{D. Osin}

\maketitle

\vspace{-3mm}

\begin{abstract}
Let $\G_n$ denote the space of $n$-generated marked groups. We prove that, for every $n\ge 2$, there exist $2^{\aleph_0}$ non-atomic, $\Out(F_n)$-invariant, mixing probability measures on $\G_n$. On the other hand, there are non-empty closed subsets of $\G_n$ that admit no $\Out(F_n)$-invariant probability measure. Acylindrical hyperbolicity of the group $\Aut(F_n)$ plays a crucial role in the proof of both results. We also discuss model theoretic implications of the existence of $\Out(F_n)$-invariant, ergodic probability measures on $\G_n$.
\end{abstract}

\section{Introduction}

Let $n$ be a natural number. An \emph{$n$-generated marked groups} is a pair $(G,X)$, where $G$ is a group and $X\in G^n$  is an ordered generating set of $G$. Two marked groups $(G, (x_1, \ldots, x_n))$ and $(H, (y_1, \ldots, y_n))$ are \emph{isomorphic} if there exists a group isomorphism $G\to H$ that sends $x_i$ to $y_i$ for all $i=1, \ldots, n$.

The set of isomorphism classes of $n$-generated marked groups, denoted by $\G_n$, can be naturally identified with normal subgroup of $F_n$, the free group of rank $n$. Namely, a marked group $(G,X)$ corresponds to the kernel of the natural homomorphism $F_n\to G$ induced by mapping a fixed basis of $F_n$ to $X$. The product topology on $2^{F_n}$ induces the structure of a compact Polish space on the set of normal subgroups of $F_n$ and, via the  above identification, on $\mathcal G_n$.

In the context of combinatorial group theory, the usefulness of this topological approach was demonstrated by Grigorchuk in \cite{Gri84}. Ever since, the space $\G_n$ has played an important role in the study of algebraic, geometric, and model theoretic properties of groups. For more details, we refer the reader to \cite{Cha,CG,Gri05,Osi21} and references therein.

The action of $\Aut(F_n)$ on $F_n$ gives rise to an action of $\Out(F_n)$ on $\mathcal G_n$ by homeomorphisms, which preserves the (unmarked) isomorphism relation; that is, if $(G,X)$ and $(H,Y)$ belong to the same $Out(F_n)$-orbit in $\G_n$, then $G\cong H$. In this paper, we study the existence of invariant Borel probability measures in this dynamical system. Specifically, we address the following basic question asked by Grigorchuk during a talk at the conference ``Self-similarity of groups, trees and fractals" in June 2022 (a similar problem is discussed in the paper \cite{Gri05}).

\begin{q}\label{p1}
Does there exist an $\Out(F_n)$-invariant, non-atomic, ergodic probability measure on $\G_n$?
\end{q}

\begin{rem}\label{rem} Atomic $\Out(F_n)$-invariant probability measures on $\G_n$ are easy to construct. Indeed, the action of $\Out(F_n)$ on $\G_n$ fixes all points corresponding to characteristic subgroups of $F_n$. Clearly, the Dirac measures supported on these points are $\Out(F_n)$-invariant. More generally, we can define probability measures supported on finite orbits (e.g., orbits of finite marked groups).

Furthermore, Olshanskii's solution of the finite basis problem for group varieties \cite{Ols} implies the existence $2^{\aleph_0}$ points in $\G_n$ fixed by $\Out(F_n)$. Since the set of fixed points is closed, it contains a non-empty closed subset $\mathcal C$ without isolated points by the Cantor-Bendixson theorem. Recall that the set of non-atomic probability measures supported on a compact metrizable space $X$ without isolated points is dense in the space of all probability measures on $X$ equipped with the weak$^\ast$ topology. Therefore, there exist plenty of non-atomic, $\Out(F_n)$-invariant measures on $\G_n$ supported on $\mathcal C$. However, all these measures are not ergodic.
\end{rem}

Apart from being natural, Question \ref{p1} is also motivated by the following observation: \emph{if $\mu$ is an $\Out(F_n)$-invariant, ergodic probability measure on $\G_n$, then $\mu$-generic marked groups are ``virtually indistinguishable"}. The precise formulation of this claim involves infinitary model theory and is given in Section \ref{Sec:MT}. In particular, every non-atomic, $\Out(F_n)$-invariant, ergodic probability measure on $\G_n$ gives rise to an uncountable set of finitely generated, pairwise non-isomorphic, elementarily equivalent groups (see Proposition \ref{Prop:EE} and Remark \ref{Rem:c}). Examples of this kind are of interest since the standard tools for constructing elementarily equivalent models, such as ultrapowers and the L\"owenheim-Skolem theorem, are incapable of producing finitely generated structures.

As shown below, the answer to Question \ref{p1} is affirmative. This prompts yet another natural problem.

\begin{q}\label{p2}
Does every subsystem of $\Out(F_n)\curvearrowright \mathcal G_n$ admit an $Out(F_n)$-invariant probability measure?
\end{q}

By a \emph{subsystem} of the dynamical system $\Out(F_n)\curvearrowright \mathcal G_n$ we mean a non-empty, closed, $Out(F_n)$-invariant subset of $\G_n$. Note that the ``obvious" probability measures discussed in Remark \ref{rem} can be avoided by passing to minimal subsystems containing infinitely many points.

The main goal of this paper is to answer both questions. For a group $G$, we denote by $2^G$ the power set of $G$ endowed with the product topology. The action of $G$ on itself by left multiplication induces a continuous $G$-action on $2^G$. Our approach is based on the following theorem of independent interest.

\begin{thm} \label{main1}
For every integer $n\ge 2$, there exists a continuous, injective, $Out(F_n)$-equivariant map $2^{\Out(F_n)}\to \G_n$.
\end{thm}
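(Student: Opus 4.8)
The plan is to produce, for each subset $S\subseteq\Out(F_n)$, a normal subgroup $N_S\trianglelefteq F_n$ such that $S\mapsto N_S$ is injective, continuous from the product topology on $2^{\Out(F_n)}$ to the topology of $\G_n$ (viewed as the space of normal subgroups of $F_n$ inside $2^{F_n}$), and equivariant in the sense that $\alpha(N_S)=N_{\bar\alpha S}$ for every $\alpha\in\Aut(F_n)$, where $\bar\alpha S=\{\bar\alpha\bar\beta:\bar\beta\in S\}$. Since inner automorphisms preserve every normal subgroup, such a family is exactly the data of a continuous injective $\Out(F_n)$-equivariant map $2^{\Out(F_n)}\to\G_n$. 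The subgroups will be normal closures of chosen relators: fixing an element $v\in F_n$ and a large integer $N$, let $w_{\bar\alpha}$ be a representative of the conjugacy class $\bar\alpha\cdot[v]$ (that is, $w_{\bar\alpha}=\alpha(v)$ for any lift $\alpha$ of $\bar\alpha$, well defined up to conjugacy), and set $N_S=\langle\langle w_{\bar\alpha}^{\,N}:\bar\alpha\in S\rangle\rangle$. Equivariance is then immediate from $\alpha(\langle\langle A\rangle\rangle)=\langle\langle\alpha(A)\rangle\rangle$ together with $[\alpha(w_{\bar\beta})]=\bar\alpha\bar\beta\cdot[v]=[w_{\bar\alpha\bar\beta}]$; note that the exponent $N$ must be the \emph{same} for all indices for this to work, which rules out any approach with index-dependent exponents. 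Everything then reduces to choosing $v$ and $N$ so that injectivity and continuity hold.

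For the choice of $v$ I would use that $\Aut(F_n)$ is acylindrically hyperbolic for $n\ge 2$ and that $\Inn(F_n)\cong F_n$ is an infinite, not virtually cyclic, normal subgroup: fix an acylindrical action of $\Aut(F_n)$ on a hyperbolic space $Y$ for which $\Inn(F_n)$ is non-elementary, and choose $v\in F_n$ with $\mathrm{ad}_v$ loxodromic and with minimal elementary closure $E_{\Aut(F_n)}(\mathrm{ad}_v)=\langle\mathrm{ad}_v\rangle$; such elements exist by the standard ping-pong construction of loxodromics with minimal $E$ inside a non-elementary subgroup. This has two consequences. First, $\mathrm{ad}_v$ is then a WPD element and $\langle\mathrm{ad}_v\rangle$ is hyperbolically embedded in $\Aut(F_n)$, so the family of its $\Aut(F_n)$-conjugates is hyperbolically embedded. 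Second, $E_{\Aut(F_n)}(\mathrm{ad}_v)=\langle\mathrm{ad}_v\rangle$ forces $v$ not to be a proper power, no automorphism to send $v$ to $v^{-1}$, and the stabiliser of $[v]$ in $\Out(F_n)$ to be trivial: if $\alpha(v)$ is conjugate to $v$ then, correcting $\alpha$ by an inner automorphism, $\alpha$ fixes $v$, hence $\alpha\in E_{\Aut(F_n)}(\mathrm{ad}_v)=\langle\mathrm{ad}_v\rangle$, so $\bar\alpha=1$. Consequently the conjugacy classes $[w_{\bar\alpha}]$, $\bar\alpha\in\Out(F_n)$, are pairwise distinct and pairwise non-inverse, so the relator family is genuinely parametrised by $\Out(F_n)$.

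Now take $N$ large and apply the theory of very rotating families / small cancellation over acylindrically hyperbolic groups. For $N$ large, the $\Aut(F_n)$-conjugates of $\langle\mathrm{ad}_v^{\,N}\rangle$, with apexes the corresponding cone points over their axes, form a very rotating family, and any sub-collection invariant under the acting group is again one. For $S\subseteq\Out(F_n)$, the sub-collection indexed by $S$ — whose apexes form a union of $\Inn(F_n)$-orbits, one for each $\bar\beta\in S$, and whose normal closure \emph{taken inside $F_n=\Inn(F_n)$} is exactly (the image under $\mathrm{ad}$ of) $N_S$ — is $\Inn(F_n)$-invariant, hence a very rotating family for $\Inn(F_n)\curvearrowright\widehat Y$. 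The rotating family theorem then gives what is needed. For injectivity: when $\bar 1\notin S$ the apex associated with $[v]$ is absent from the $S$-family, so $\mathrm{ad}_v$ is not conjugate into any apex stabiliser of that family; hence its image in $F_n/N_S$ is loxodromic, in particular of infinite order, so $v^{\,N}\notin N_S$; combined with equivariance (apply a lift of $\bar\gamma^{-1}$) this yields $w_{\bar\gamma}^{\,N}\notin N_S$ whenever $\bar\gamma\notin S$, and therefore $N_S=N_{S'}$ forces $S=S'$. For continuity: for fixed $g\in F_n$, a Greendlinger-type statement for very rotating families shows that membership $g\in N_S$ is witnessed by rotations at apexes contained in a bounded neighbourhood of a geodesic representing $g$, with only finitely many such apexes — hence only finitely many indices $\bar\beta$, forming a finite set $F_g$ depending only on $g$ — so $g\in N_S\iff g\in N_{S\cap F_g}$; thus $\{S:g\in N_S\}$ depends on $S$ only through the finite set $S\cap F_g$ and is clopen. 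Since this holds for every $g\in F_n$, the map $S\mapsto N_S$ is continuous into $2^{F_n}$, hence into $\G_n$, and the theorem follows.

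I expect the main obstacle to lie precisely in the small cancellation step, and the heart of the difficulty is that equivariance forces the relator set to be the \emph{entire} $\Out(F_n)$-orbit of a single conjugacy class with one uniform exponent $N$. That orbit is not a small cancellation family in the classical sense — inside $F_n$ its members can share arbitrarily long common subwords, and no uniform $N$ removes this — so one cannot work with the Cayley tree of $F_n$, nor run a graded/iterated argument (which would need index-dependent exponents). What makes it work is that $\langle\mathrm{ad}_v\rangle$ is hyperbolically embedded in the \emph{larger} group $\Aut(F_n)$: passing to the cone-off over the $\Aut(F_n)$-conjugates of $\langle\mathrm{ad}_v\rangle$ absorbs all these overlaps and turns the relator family into a very rotating family, while the sub-family indexed by $S$ inherits the structure. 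The technical core is thus to verify that this $S$-indexed sub-collection, with normal closure taken inside $\Inn(F_n)$ rather than $\Aut(F_n)$, is a genuine very rotating family — so that the rotating family theorem, with its control of which elements survive and its Greendlinger lemma, is applicable — and it is here that acylindrical hyperbolicity of $\Aut(F_n)$ (needed to place a loxodromic WPD element with minimal elementary closure inside $\Inn(F_n)$) enters in an essential way.
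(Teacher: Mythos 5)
Your choice of ingredients is exactly the paper's: acylindrical hyperbolicity of $\Aut(F_n)$, the observation that $\Inn(F_n)$ is an infinite normal subgroup normalizing no finite normal subgroup, a loxodromic $\mathrm{ad}_v\in\Inn(F_n)$ with $E(\mathrm{ad}_v)=\langle \mathrm{ad}_v\rangle$ (Lemmas \ref{Lem:suit}--\ref{Lem:lox}), and relators drawn from the single $\Aut(F_n)$-orbit of $v^N$. The divergence is in how the combinatorics of $S\mapsto N_S$ is controlled. The paper does not work with the normal closures $\ll w_{\bar\beta}^{N}:\bar\beta\in S\rr$ themselves: it passes to $W=\Aut(F_n)/[\ll g^{k}\rr,\ll g^{k}\rr]$ and uses the Cohen--Lyndon-type structure of the Dehn filling kernel, packaged as the wreath-like product of Lemma \ref{Lem:WR}, so that the relevant normal subgroups become sub-direct-sums of $\bigoplus_{i\in I}\ZZ$ permuted by the action; injectivity, continuity and equivariance then reduce to the trivial Lemma \ref{Lem:iota}. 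Your version keeps the normal closures un-abelianized inside $F_n$ and therefore has to extract injectivity and continuity from the rotating-family machinery directly.

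That is where the gap is, specifically in continuity. Openness of $\{S: g\in N_S\}$ is free (normal closures are directed unions over finite subsets of $S$); what you need is closedness, i.e.\ a finite $F_g\subseteq\Out(F_n)$, \emph{independent of $S$}, with $g\in N_S\Leftrightarrow g\in N_{S\cap F_g}$. Your justification --- only finitely many apexes lie in a bounded neighbourhood of a geodesic for $g$ --- is false as stated: the cone-off $\widehat{Y}$ is not proper, and a bounded subset of $\widehat{Y}$ meets infinitely many cone points in general (already infinitely many conjugates of a cyclic subgroup of $F_2$ have axes passing near a basepoint). What is finite is the set of cosets that a fixed geodesic in the \emph{original} space penetrates deeply; but the Greendlinger shortening replaces $g$ by $rg$, the new geodesic need not stay near the old one, and the apexes produced by the lemma a priori depend on which subfamily $S$ you ran it for. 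None of this is addressed, and it is precisely to avoid this point that the paper routes through Lemma \ref{Lem:WR}. The clean repair is Sun's Cohen--Lyndon theorem (the engine behind Lemma \ref{Lem:WR}): $\ll \mathrm{ad}_{v^N}\rr_{\Aut(F_n)}$ is the free product of the conjugates $t\langle \mathrm{ad}_{v^N}\rangle t^{-1}$, $\Inn(F_n)$ permutes the free factors, $N_S$ is the kernel of the retraction killing the $S$-indexed factors, and normal forms in the free product give both injectivity and continuity at once. Separately, your injectivity step misstates the mechanism: $\mathrm{ad}_v$ is \emph{elliptic} in $\widehat{Y}$ (it fixes the cone point over its own axis), not loxodromic; the correct argument is that an element of the normal closure of a very rotating family that is elliptic in $\widehat{Y}$ must be conjugate into an apex subgroup of that family, and $v^N$ conjugate to a power of some $w_{\bar\beta}^{N}$ with $\bar\beta\in S$ would force $\bar\beta=\bar1\notin S$. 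That slip is fixable; the continuity step, as written, is not a proof.
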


Theorem \ref{main1} allows us to translate various dynamical phenomena occurring in $2^{\Out(F_n)}$ to $\G_n$. Its proof makes use of acylindrical hyperbolicity of the group $\Aut(F_n)$ established in \cite{GH} and certain results obtained via group theoretic Dehn filling in \cite{CIOS}.

It is well-known that the dynamical system $G\curvearrowright 2^G$ has $2^{\aleph_0}$  non-atomic, $G$-invariant, mixing measures. Utilizing Theorem \ref{main1}, we obtain the affirmative answer to Question \ref{p1}.

\begin{cor} \label{Cor1}
For any $n\ge 2$, there exist $2^{\aleph_0}$  pairwise distinct, non-atomic, $\Out(F_n)$-invariant, mixing (in particular, ergodic), probability measures on $\mathcal G_n$.
\end{cor}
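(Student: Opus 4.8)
The plan is to transport invariant, mixing measures from the Bernoulli-type system $\Out(F_n)\curvearrowright 2^{\Out(F_n)}$ to $\G_n$ via the map furnished by Theorem~\ref{main1}. First I would recall the well-known fact cited in the text: for any countably infinite group $G$, the shift action $G\curvearrowright 2^G$ admits $2^{\aleph_0}$ pairwise distinct, non-atomic, $G$-invariant, mixing probability measures. Concretely, for each $p\in(0,1)$ one takes the Bernoulli measure $\nu_p$ on $2^G=\{0,1\}^G$ that assigns weight $p$ to $1$ and $1-p$ to $0$ in each coordinate independently; these are manifestly $G$-invariant, mixing (hence ergodic), non-atomic since $G$ is infinite, and pairwise distinct for distinct $p$ (they have different one-dimensional marginals). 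Since $\Out(F_n)$ is infinite for $n\ge 2$, this gives us the desired supply of measures on $2^{\Out(F_n)}$.

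Next, let $\iota\colon 2^{\Out(F_n)}\to\G_n$ be the continuous, injective, $\Out(F_n)$-equivariant map from Theorem~\ref{main1}. For each $p$, define $\mu_p:=\iota_*\nu_p$, the pushforward measure on $\G_n$. Since $\iota$ is continuous between compact metrizable spaces, it is Borel, so $\mu_p$ is a well-defined Borel probability measure on $\G_n$. Equivariance of $\iota$ gives, for every $g\in\Out(F_n)$ and Borel $A\subseteq\G_n$, that $(g_*\mu_p)(A)=\mu_p(g^{-1}A)=\nu_p(\iota^{-1}(g^{-1}A))=\nu_p(g^{-1}\iota^{-1}(A))=\nu_p(\iota^{-1}(A))=\mu_p(A)$, so $\mu_p$ is $\Out(F_n)$-invariant. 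For mixing (and hence ergodicity), I would observe that $\iota$ is a measure-theoretic isomorphism of the system $(2^{\Out(F_n)},\nu_p,\Out(F_n))$ onto its image $(\iota(2^{\Out(F_n)}),\mu_p,\Out(F_n))$: indeed $\iota$ is a continuous injection from a compact space, hence a homeomorphism onto its (necessarily closed) image, and it intertwines the two actions. Mixing is a conjugacy invariant of measure-preserving systems, so $\mu_p$ is mixing as a measure on $\G_n$ (the mixing condition, $\mu_p(g_kA\cap B)\to\mu_p(A)\mu_p(B)$ for any sequence $g_k\to\infty$ in $\Out(F_n)$, transfers verbatim through the conjugacy), and mixing implies ergodicity.

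It remains to check non-atomicity and pairwise distinctness. For non-atomicity: if $\mu_p(\{s\})>0$ for some $s\in\G_n$, then since $\iota$ is injective, $\iota^{-1}(\{s\})$ is either empty or a single point $t$, whence $\nu_p(\{t\})=\mu_p(\{s\})>0$, contradicting that $\nu_p$ is non-atomic. For pairwise distinctness: if $\mu_p=\mu_q$ then pulling back along the homeomorphism $\iota$ onto its image forces $\nu_p=\nu_q$ on $2^{\Out(F_n)}$, hence $p=q$. This yields $2^{\aleph_0}$ pairwise distinct measures with all the required properties.

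I do not anticipate a genuine obstacle here; the corollary is essentially a formal consequence of Theorem~\ref{main1} together with the standard structure of Bernoulli shifts. The only point requiring a modicum of care is the transfer of the mixing property, which one should phrase as a statement about measure-theoretic conjugacy of dynamical systems rather than trying to verify the mixing inequality by hand on $\G_n$; once it is set up that way, everything follows. (If one wished to avoid even the language of conjugacy, one could instead pull cylinder sets in $2^{\Out(F_n)}$ through $\iota$ directly, but invoking conjugacy-invariance is cleaner.)
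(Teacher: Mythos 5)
Your proposal is correct and follows essentially the same route as the paper: push forward the Bernoulli measures $\nu_p$, $p\in(0,1)$, on $2^{\Out(F_n)}$ (whose mixing is Lemma \ref{Lem:K}(a)) through the continuous, injective, equivariant map of Theorem \ref{main1}, and use injectivity to transfer non-atomicity, mixing, and pairwise distinctness. The paper leaves these verifications as ``easy to see''; your write-up simply supplies the details.
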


On the other hand, for every non-amenable group $G$, there exists a closed $G$-invariant subset of $2^G$ without any $G$-invariant probability measure (see \cite[Theorem 1.2.2]{FKSV}). Since $Out(F_n)$ is non-amenable for all $n\ge 2$, we get the negative answer to Question \ref{p2}.

\begin{cor} \label{Cor2}
For any $n\ge 2$, the dynamical system $\Out(F_n)\curvearrowright\G_n$ contains a subsystem that does not admit any $\Out(F_n)$-invariant probability measure.
\end{cor}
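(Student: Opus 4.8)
The plan is to deduce Corollary~\ref{Cor2} formally from Theorem~\ref{main1} together with the two facts recalled just above its statement: that a non-amenable group $G$ admits a closed $G$-invariant subset of $2^G$ with no $G$-invariant probability measure \cite[Theorem 1.2.2]{FKSV}, and that $\Out(F_n)$ is non-amenable for $n\ge 2$. Write $G=\Out(F_n)$. I would first record the non-amenability of $G$: for $n\ge 2$ the natural homomorphism $\Out(F_n)\to GL_n(\ZZ)$ is surjective, and $GL_n(\ZZ)$ contains a non-abelian free subgroup, hence so does $G$. Applying \cite[Theorem 1.2.2]{FKSV} then produces a non-empty, closed, $G$-invariant subset $Y\subseteq 2^{G}$ that carries no $G$-invariant Borel probability measure. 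Note that $Y$ is compact, being a closed subset of the compact space $2^{G}$.

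Next I would transport $Y$ into $\G_n$ via the map $\iota\colon 2^{G}\to\G_n$ furnished by Theorem~\ref{main1}. Since $2^{G}$ is compact, $\G_n$ is Hausdorff, and $\iota$ is continuous and injective, $\iota$ is a homeomorphism onto its image; in particular $\iota(Y)$ is a non-empty compact, hence closed, subset of $\G_n$, and it is $\Out(F_n)$-invariant because $\iota$ is $\Out(F_n)$-equivariant. Thus $\iota(Y)$ is a subsystem of $\Out(F_n)\curvearrowright\G_n$ in the sense of Question~\ref{p2}.

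It remains to see that $\iota(Y)$ admits no $\Out(F_n)$-invariant probability measure. The restriction $\iota|_Y\colon Y\to\iota(Y)$ is an $\Out(F_n)$-equivariant homeomorphism, so push-forward along its inverse carries any $\Out(F_n)$-invariant Borel probability measure on $\iota(Y)$ to an $\Out(F_n)$-invariant Borel probability measure on $Y$. Since no such measure on $Y$ exists, none exists on $\iota(Y)$, which proves the corollary.

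There is essentially no genuine obstacle here; the argument is a routine transfer of a known phenomenon in $2^{G}$ to $\G_n$ along the embedding of Theorem~\ref{main1}. The only points needing a line of justification are the non-amenability of $\Out(F_n)$ (handled via the surjection onto $GL_n(\ZZ)$) and the observation that $\iota$, being a continuous injection from a compact space into a Hausdorff space, is a homeomorphism onto its image, so that closedness of $Y$ and the (non-)existence of invariant measures all transfer cleanly.
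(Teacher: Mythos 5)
Your proposal is correct and follows essentially the same route as the paper: non-amenability of $\Out(F_n)$ via the surjection onto $GL(n,\ZZ)$, the result of \cite[Theorem 1.2.2]{FKSV} producing a measureless subsystem of $2^{\Out(F_n)}$, and transport along the embedding of Theorem \ref{main1}. Your explicit remark that a continuous injection from a compact space to a Hausdorff space is a homeomorphism onto its image cleanly justifies the pull-back/push-forward of measures, which the paper leaves implicit.
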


The paper is organised as follows. In the next section, we collect some basic definitions and necessary results on acylindrically hyperbolic groups. In Section \ref{Sec:AH}, we prove a generalization of Theorem \ref{main1} and derive Corollaries \ref{Cor1} and \ref{Cor2}. Section~\ref{Sec:MT} is devoted to the discussion of model theoretic aspects of ergodic measures on $\G_n$.

\paragraph{Acknowledgments.} The author is grateful to Rostislav Grigorchuk and Vadim Kaimanovich for bringing Questions \ref{p1} and \ref{p2} to his attention and stimulating discussions. This work has been supported by the NSF grant DMS-1853989.

\section{Preliminaries}

\paragraph{2.1. Dynamical systems.}
Let $G\curvearrowright X$ be a topological dynamical system; that is, $X$ is a topological space  endowed with a continuous action of a group $G$. By a \emph{subsystem} of $G\curvearrowright X$ we mean any closed subset of $X$ endowed with the induces action of $G$.

A \emph{probability measure} on $X$ is a countably additive measure $\mu$ defined on the $\sigma$-algebra of all Borel subsets of $X$ such that $\mu(X)=1$. A probability measure $\mu$ on $X$ is \emph{$G$-invariant} if $\mu(gA)=\mu(A)$ for every Borel subset $A\subseteq X$ and any $g\in G$. Further, a $G$-invariant measure $\mu$ on $X$ is said to be
\vspace{-3mm}
\begin{itemize}\setlength\itemsep{1.2mm}
\item \emph{non-atomic} if $\mu(x)=0$ for all $x\in X$;

\item \emph{ergodic} if $\mu(A)\in \{ 0, 1\}$ for every $G$-invariant Borel set $A\subseteq X$;

\item \emph{mixing} if, for any Borel $A,B\subseteq X$ and any $\e>0$, there exists a finite subset $S\subseteq G$ such that $|\mu(A \cap gB)-\mu(A)\mu(B)|<\e$ for all $g\in G\setminus S$.
\end{itemize}
\vspace{-3mm}
\noindent Obviously, every mixing measure is ergodic.

For an abstract set $I$, let $2^I$ denote the set of all subsets of $I$ endowed with the product topology. Recall that the {\it product topology} on $2^I$ is simply the topology of pointwise convergence of indicator functions. Equivalently, it can be defined by taking the family of sets $U(S,F)=\{ T\in 2^I\mid T\cap F=S\cap F\}$, where $F$ ranges in the set of all finite subsets of $I$, as the base of neighborhoods of the point $S\in 2^I$. By Tychonoff's theorem, $2^I$ is compact. For any measure $\mu$ on $\{ 0,1\}$, we denote by $\mu^I$ the product measure on $2^I$ (formally speaking, we identify the power set of $I$ with $\{ 0,1\}^I$ here).

Dynamical systems of particular interest to us are \emph{generalized Bernoulli shifts} $G\curvearrowright 2^{G/H}$, where $H$ is a subgroup of $G$ and $G/H$ denotes the set of left cosets. The action of the group $G$ on $G/H$ by left multiplication induces a continuous action of $G$ on $2^{G/H}$. If $H=\{ 1\}$ we obtain the ordinary Bernoulli shift $G\curvearrowright 2^G$. We will need the following.

\begin{lem}[{\cite[Proposition 2.3]{KT} and \cite[Theorem 1.2.2]{FKSV}}]\label{Lem:K}
Let $G$ be a countably infinite group.
\begin{enumerate}
\item[(a)]  If $\mu$ is a measure on $\{ 0, 1\}$ that does not concentrate on a single point, then the product measure $\mu^G$ on $2^G$ is mixing.
\item[(b)] The group $G$ is amenable if and only if every subsystem of $G\curvearrowright 2^G$ admits an invariant probability measure.
\end{enumerate}
\end{lem}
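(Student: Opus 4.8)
The statements here are classical and the paper cites \cite{KT,FKSV} for them; the plan is to indicate the arguments I would give. Throughout, $G$ is countably infinite and acts on $2^{G}$ by the shift, $g\cdot A=gA$, i.e. $(g\cdot c)(h)=c(g^{-1}h)$ on indicator functions. For \textbf{(a)} I would first treat \emph{cylinder} sets: if $A$ depends only on the coordinates in a finite set $F\subseteq G$ and $B$ only on those in a finite set $F'$, then $g\cdot B$ depends only on the coordinates in $gF'$, so whenever $F\cap gF'=\emptyset$ the sets $A$ and $g\cdot B$ depend on disjoint blocks of coordinates and hence are independent for the product measure; combined with shift-invariance of $\mu^{G}$ this gives $\mu^{G}(A\cap g\cdot B)=\mu^{G}(A)\mu^{G}(B)$ \emph{exactly}, for all $g$ outside the finite set $F(F')^{-1}$. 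General Borel $A,B$ are then approximated in measure by cylinders $A',B'$, and one pushes $\e$ through via $|\mu^{G}(A\cap g\cdot B)-\mu^{G}(A'\cap g\cdot B')|\le\mu^{G}(A\triangle A')+\mu^{G}(B\triangle B')$ together with the same estimate for the products; here the infinitude of $G$ is exactly what makes the set of exceptional $g$ a proper subset. (Non-degeneracy of $\mu$ is not needed for mixing, but it additionally forces $\mu^{G}$ to be non-atomic, since $\mu^{G}(\{c\})=\prod_{g\in G}\mu(\{c(g)\})=0$; this is the form in which (a) is used later.)

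For \textbf{(b)}, the ``only if'' direction is the easy one: if $G$ is amenable it has the fixed-point property, so for any subsystem $Y\subseteq 2^{G}$ — a nonempty compact metrizable $G$-space — the affine $G$-action on the compact convex set $\mathrm{Prob}(Y)$ of Borel probability measures (weak$^{*}$ topology, compactness by Banach--Alaoglu) has a fixed point, which is precisely a $G$-invariant probability measure on $Y$.

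The ``if'' direction is the one I expect to be the real work. I would prove the contrapositive: assuming $G$ non-amenable, exhibit a subsystem of $2^{G}$ carrying no invariant probability measure. By Tarski's theorem $G$ is paradoxical, and unwinding a paradoxical decomposition gives a finite set $S\subseteq G$ and a map $f\colon G\to G$ with $f(x)\in xS$ and $\lvert f^{-1}(z)\rvert\ge 2$ for all $x,z$. Encoding $f$ as $c_{f}(x):=x^{-1}f(x)\in S$, consider $Y:=\{\,c\in S^{G}\subseteq 2^{G}\ :\ \lvert f_{c}^{-1}(z)\rvert\ge 2\text{ for all }z\in G\,\}$, where $f_{c}(x):=x\,c(x)$. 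Then $Y\ni c_{f}$ is nonempty; it is closed because the condition at each $z$ depends only on the finitely many coordinates of $c$ in $zS^{-1}$; and it is shift-invariant, since a direct computation gives $f_{g\cdot c}(x)=g\,f_{c}(g^{-1}x)$, and conjugating an everywhere-at-least-$2$-to-one map by a bijection keeps it so. Finally, if $\mu$ were a $G$-invariant probability measure on $Y$, then setting $p_{s}:=\mu(\{c:c(1)=s\})$ we get $\mu(\{c:c(x)=s\})=p_{s}$ for every $x\in G$ by invariance, and $\sum_{s\in S}p_{s}=1$; but for any fixed $z$ one has $\lvert f_{c}^{-1}(z)\rvert=\sum_{x\in zS^{-1}}\mathbf{1}[\,c(x)=x^{-1}z\,]$, so integrating over $Y$ gives $\int_{Y}\lvert f_{c}^{-1}(z)\rvert\,d\mu=\sum_{x\in zS^{-1}}p_{x^{-1}z}=\sum_{s\in S}p_{s}=1$, contradicting that the integrand is $\ge 2$ everywhere on $Y$. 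Hence no invariant probability measure exists, completing the contrapositive. The content that \cite{FKSV} supplies is the verification that such a $Y$ is a bona fide subshift; the measure-theoretic obstruction is the short averaging computation above.
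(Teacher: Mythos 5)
The paper does not prove this lemma at all --- it is imported verbatim from \cite[Proposition 2.3]{KT} and \cite[Theorem 1.2.2]{FKSV} --- so there is no in-text argument to compare yours against; I can only assess your proof on its own terms. Part (a) and the ``only if'' half of part (b) are correct and standard: exact independence of cylinder sets over disjoint coordinate blocks for $g\notin F(F')^{-1}$, followed by approximation of Borel sets by cylinders, gives mixing; and amenability yields an invariant measure on any nonempty compact invariant $Y$ via the fixed-point property on the weak$^*$-compact convex set of probability measures on $Y$.

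The ``if'' direction, however, has a genuine gap. Your set $Y$ is a closed, shift-invariant subset of $S^G$, the full shift over the finite alphabet $S$, and your averaging computation $1=\sum_{s\in S}p_s=\int_Y|f_c^{-1}(z)|\,d\mu\ge 2$ correctly shows that $Y$ carries no invariant measure. But the lemma asserts the existence of such a subsystem inside $2^G$, the shift on \emph{two} symbols, and that is exactly what the paper needs: in the proof of Corollary \ref{Cor2} the bad set $C$ must live in $2^{\Out(F_n)}$ so that its image under the map of Theorem \ref{main1} is a subsystem of $\G_n$. The inclusion ``$S^G\subseteq 2^G$'' you write is simply false once $|S|\ge 3$, and this case cannot be dodged: if $|S|=2$ then $f^{-1}(z)\subseteq zS^{-1}$ forces $f^{-1}(z)=zS^{-1}$ for every $z$, so each $x$ would have to be sent simultaneously to $xs_1$ and $xs_2$, a contradiction; hence $|S|\ge 3$ for every nonamenable $G$. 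Nor is the reduction to a binary alphabet a formality. A symbol-by-symbol encoding of $c(x)$ needs several $\{0,1\}$-coordinates per point $x$, but the coordinates of $2^G$ are indexed by $G$ itself, on which $G$ acts transitively, so the blocks attached to distinct points necessarily overlap; and an equivariant topological embedding of the full $|S|$-shift into the full $2$-shift does not exist in general (over $\ZZ$ it is already excluded by entropy). Converting an $S$-ary subshift without invariant measures into a binary one requires an additional marker/block-coding argument, or a construction carried out in $2^G$ from the outset --- and this is precisely the content that the citation to \cite{FKSV} is supplying. The remaining ingredients of your argument (existence of the at-least-two-to-one map $f$ with $f(x)\in xS$ from a paradoxical decomposition via Hall--Rado, closedness and shift-invariance of $Y$, and the integral contradiction) are all fine.
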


\paragraph{2.2. Acylindrically hyperbolic groups.}
An isometric action of a group $G$ on a metric space $S$ is {\it acylindrical} if for every $\e>0$ there exist $R,N>0$ such that for every two points $x,y\in S$ with $\d (x,y)\ge R$, there are at most $N$ elements $g\in G$ satisfying
$$
\d(x,gx)\le \e \;\;\; {\rm and}\;\;\; \d(y,gy) \le \e.
$$

An action of a group $G$ on a hyperbolic space $S$ is \emph{non-elementary} if the limit set of $G$ on the Gromov boundary $\partial S$ has infinitely many points; for acylindrical actions, this condition is equivalent to the requirement that $G$ is not virtually cyclic and the action has infinite orbits {\cite[Theorem 1.1]{Osi16}}. Every group has an acylindrical action on a hyperbolic space, namely the trivial action on the point. For this reason, we want to avoid elementary actions in the definition below.

\begin{defn}
A group is \emph{acylindrically hyperbolic} if it admits a non-elementary acylindrical action (by isometries) on a hyperbolic space.
\end{defn}

The class of acylindrically hyperbolic groups is rather wide and includes all non-elementary hyperbolic and relatively hyperbolic groups, $Out(F_n)$ for $n\ge 2$, mapping class groups of closed surfaces of genus at least $2$, finitely presented groups of deficiency at least $2$, most $3$-manifold groups, and many other examples. For details, we refer the reader to \cite{Osi18,Osi16} and references therein. An example of particular importance for us is the group $Aut(F_n)$, whose acylindrical hyperbolicity was recently proved by Genevois and Horbez \cite{GH}.

Below, we summarize some results about acylindrically hyperbolic groups from \cite{CIOS,DGO,Hull} necessary for the proof of Theorem \ref{main1}. Note that acylindrically hyperbolic groups appear in \cite{DGO} under the name of ``groups with non-degenerate hyperbolically embedded subgroups". The equivalence of this condition to acylindrical hyperbolicity was established later in \cite{Osi16}.

\begin{lem}[{\cite[Theorem 2.24]{DGO}}]\label{Lem:KG}
Every acylindrically hyperbolic group $G$ contains a unique maximal finite normal subgroup.
\end{lem}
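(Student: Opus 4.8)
Let $M$ be the union of all finite normal subgroups of $G$. The plan is to show that $M$ is itself a finite normal subgroup; since it obviously contains every finite normal subgroup of $G$, it is then the unique maximal one. That $M$ is a subgroup (automatically normal, and locally finite) follows from the observation that the product $K_1K_2$ of two finite normal subgroups of $G$ is again a finite normal subgroup, so $M$ is closed under multiplication and inversion. Thus everything reduces to proving $|M|<\infty$.

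Fix a non-elementary acylindrical action of $G$ on a hyperbolic space $S$ and a basepoint $x_0\in S$. The first step is to check that $M$ acts on $S$ with bounded orbits. The restriction of an acylindrical action to a subgroup is again acylindrical, so $M$ acts acylindrically on $S$; and, being locally finite, $M$ is a torsion group and hence contains no element acting loxodromically on $S$. By the structure theorem for acylindrical actions on hyperbolic spaces \cite[Theorem 1.1]{Osi16} --- such an action either has bounded orbits or contains a loxodromic element --- the action of $M$ must have bounded orbits. Put $D:=\sup_{m\in M}\d(x_0,mx_0)<\infty$.

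The second step exploits acylindricity of the $G$-action together with normality of $M$. Since the $G$-action is non-elementary and acylindrical, $G$ contains a loxodromic element $g$ (again by \cite[Theorem 1.1]{Osi16}); in particular $\d(x_0,g^kx_0)\to\infty$ as $k\to\infty$. Apply acylindricity of the $G$-action with $\e:=D$ to obtain constants $R$ and $N$, depending only on $D$ and hence only on the fixed action, and pick $k$ with $\d(x_0,g^kx_0)\ge R$. For every $m\in M$ we have $\d(x_0,mx_0)\le D$ by definition of $D$, and, since $g^{-k}mg^k\in M$ by normality,
\[
\d\bigl(g^kx_0,\,mg^kx_0\bigr)=\d\bigl(x_0,\,(g^{-k}mg^k)x_0\bigr)\le D .
\]
Hence every element of $M$ moves both $x_0$ and $g^kx_0$ by at most $\e$, so $|M|\le N<\infty$ by acylindricity, and the reduction is complete.

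I expect the crux to be the first step: one has to rule out that the infinite locally finite group $M$ acts on $S$ with unbounded orbits (which, absent loxodromic isometries, could only happen in a horocyclic fashion), and it is precisely the interplay of acylindricity with the absence of torsion loxodromics that does this via \cite[Theorem 1.1]{Osi16}. Once bounded orbits are available, the second step is a short and standard application of the acylindricity inequalities, and the bound it produces is uniform because $N$ depends only on $D=D(S,x_0)$. (If one wishes to sidestep the structure theorem in the first step, an alternative is to replace $x_0$ by a quasi-centre of the orbit of a given finite normal subgroup, using the standard fact that a finite group acting on a $\delta$-hyperbolic geodesic space has an orbit of diameter bounded in terms of $\delta$ alone; this yields the same uniform bound on $|M|$ directly.)
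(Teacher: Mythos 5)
Your argument is correct and complete. Note, however, that the paper does not prove this statement at all: it is quoted verbatim from \cite[Theorem 2.24]{DGO}, where it is obtained through the machinery of hyperbolically embedded subgroups (finite normal subgroups are shown to lie in every non-degenerate hyperbolically embedded subgroup, in particular in the groups $E(g)$). Your route is different and, given the definition of acylindrical hyperbolicity adopted in this paper, more elementary: you work directly with the acylindricity constants. The two key points are both sound. First, the union $M$ of all finite normal subgroups is a locally finite (hence torsion) normal subgroup, so its restricted action on the hyperbolic space $S$ is acylindrical without loxodromics and therefore has bounded orbits by the trichotomy of \cite[Theorem 1.1]{Osi16}; this is exactly the step where acylindricity is indispensable, since a mere isometric action of a torsion group on a hyperbolic space could well be unbounded (horocyclic). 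Second, the normality of $M$ lets you transport the displacement bound $D$ from $x_0$ to $g^kx_0$ for a loxodromic $g\in G$ and $k$ large, and acylindricity with $\e=\max(D,1)$ caps $|M|$ by the corresponding $N$. The only cosmetic point is that $\e$ must be positive, which the choice $\max(D,1)$ handles. What your approach buys is a short self-contained proof from the paper's own definitions; what the DGO approach buys is additional structural information (e.g.\ that $K(G)$ is contained in every $E(g)$), which is not needed here.
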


The maximal finite normal subgroup of an acylindrically hyperbolic group $G$ is called the \emph{finite radical} of $G$ and is denoted by $K(G)$.

A subgroup $H$ of an acylindrically hyperbolic group $G$ is said to be \emph{suitable} if there is a generating set $X$ of $G$ such that the Cayley graph $Cay(G,X)$ is hyperbolic, the action of $H$ on $Cay(G,X)$ is non-elementary, and $H$ does not normalize any non-trivial, finite, normal subgroup of $G$ (see \cite[Definition 1.4]{Hull}). The next lemma is well-known to experts, although it does not seem to have been recorded in the literature.

\begin{lem}\label{Lem:suit}
If $G$ is an acylindrically hyperbolic group with $K(G)=\{ 1\}$, then every non-trivial normal subgroup of $G$ is suitable.
\end{lem}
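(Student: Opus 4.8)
The plan is to verify the three conditions in the definition of a suitable subgroup for an arbitrary non-trivial normal subgroup $N\trianglelefteq G$, two of which come essentially for free. By Lemma~\ref{Lem:KG}, every finite normal subgroup of $G$ is contained in $K(G)=\{1\}$, so $G$ has no non-trivial finite normal subgroup at all; hence $N$ vacuously fails to normalize such a subgroup, and, being non-trivial, $N$ is infinite. Thus the real task is to exhibit a generating set $X$ of $G$ such that $Cay(G,X)$ is hyperbolic and $N$ acts non-elementarily on it. I would fix, once and for all, a (possibly infinite) generating set $Y$ of $G$ for which $\Gamma:=Cay(G,Y)$ is hyperbolic and the action $G\curvearrowright\Gamma$ is acylindrical and non-elementary; such $Y$ exists by the standard characterization of acylindrical hyperbolicity \cite[Theorem 1.2]{Osi16}. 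Since the restriction of an acylindrical action to a subgroup is again acylindrical, it then suffices to show that $N$ acts non-elementarily on $\Gamma$, and I would take $X=Y$.

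The first step is to show $N$ contains an element acting loxodromically on $\Gamma$. If not, then, since every element in an acylindrical action is elliptic or loxodromic, every element of $N$ is elliptic, so by the classification of acylindrical actions \cite[Theorem 1.1]{Osi16} the $N$-orbits in $\Gamma$ are bounded, of diameter at most some $D$. Let $R,M$ be the constants given by acylindricity of $G\curvearrowright\Gamma$ for $\varepsilon=D$. As $G\curvearrowright\Gamma$ is non-elementary, it has unbounded orbits, so there is $g\in G$ with $d_\Gamma(1_G,g)\ge R$. For every $n\in N$ we have $d_\Gamma(1_G,n)\le D$, and, since $N$ is normal, $d_\Gamma(g,ng)=d_\Gamma(1_G,g^{-1}ng)\le D$ as well; hence $N$ is contained in the set of elements of $G$ moving both $1_G$ and $g$ by at most $D$, a set of cardinality at most $M$. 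This contradicts $N$ being infinite, so $N$ contains a loxodromic element $g_0$.

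The second step upgrades this to non-elementarity. The subgroup $N$ acts acylindrically on $\Gamma$ with unbounded orbits (it contains the loxodromic $g_0$), so by \cite[Theorem 1.1]{Osi16} it is either virtually cyclic or non-elementary, and I must exclude the former. Were $N$ virtually cyclic, its limit set in $\partial\Gamma$ would be the two-point set $\{g_0^{+},g_0^{-}\}$ of fixed points of $g_0$. For any $h\in G$ the conjugate $hg_0h^{-1}$ lies in $N$ by normality and is loxodromic with fixed-point set $h\{g_0^{+},g_0^{-}\}$, which must lie inside the limit set $\{g_0^{+},g_0^{-}\}$ of $N$; since both are two-element sets, $h\{g_0^{+},g_0^{-}\}=\{g_0^{+},g_0^{-}\}$. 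Thus the pair $\{g_0^{+},g_0^{-}\}$ is $G$-invariant, so $G$ coincides with the stabilizer of this pair, which is virtually cyclic for an acylindrical action \cite{DGO}; this contradicts the non-elementarity of $G\curvearrowright\Gamma$. Hence $N$ acts non-elementarily on $\Gamma=Cay(G,Y)$, and $N$ is suitable.

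The only genuinely delicate point is the reduction at the outset: one needs that acylindrical hyperbolicity of $G$ is witnessed by an acylindrical, non-elementary action on a \emph{Cayley graph} of $G$, and that this model can be chosen independently of $N$. Given that, the remainder is a short interplay of normality with acylindricity together with the elementary classification of acylindrical actions already recalled in the text; the auxiliary facts used (restrictions of acylindrical actions are acylindrical, acylindrical actions have no parabolic isometries, a virtually cyclic group containing a loxodromic element has that element's pair of fixed points as its whole limit set, and stabilizers of pairs of boundary points are virtually cyclic) are all routine or standard.
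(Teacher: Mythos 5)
Your verification that $N$ acts non-elementarily on a suitable Cayley graph is correct and is genuinely more self-contained than the paper's: where the paper simply invokes \cite[Lemma 7.1]{Osi16} (an infinite normal subgroup of a group acting acylindrically and non-elementarily on a hyperbolic space itself acts non-elementarily), you reprove that lemma from the acylindricity trichotomy. Both steps of your argument are sound: the counting argument with $\varepsilon=D$ correctly rules out bounded $N$-orbits using normality and $|N|=\infty$, and the limit-set argument correctly rules out the virtually cyclic case because $G$-invariance of the pair $\{g_0^{+},g_0^{-}\}$ would force $G$ into the (virtually cyclic) stabilizer $E(g_0)$. This buys independence from one external citation at the cost of a page of standard boundary dynamics; the paper's route is shorter but opaque.

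The genuine issue is your dismissal of the third condition as vacuous. You read it as ``$N$ normalizes no non-trivial finite \emph{normal} subgroup of $G$,'' which is indeed automatic when $K(G)=\{1\}$ --- but that reading makes the condition a property of $G$ alone (every subgroup normalizes a normal subgroup of $G$), which cannot be what a definition of suitability of $H$ intends. The condition actually required in \cite[Definition 1.4]{Hull}, and the one needed for the downstream application of \cite[Corollary 5.7]{Hull} in Lemma~\ref{Lem:lox}, is that $N$ normalizes no non-trivial finite subgroup of $G$ \emph{whatsoever}, normal in $G$ or not. This does not follow from $K(G)=\{1\}$ alone and is where the real content of the lemma lies. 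The paper closes this gap as follows: since $N$ acts non-elementarily, \cite[Lemma 5.5]{Hull} provides a finite subgroup $L\le G$ normalized by $N$ and containing every finite subgroup of $G$ normalized by $N$; because $N\lhd G$, each conjugate $g^{-1}Lg$ is again a finite subgroup normalized by $N$, hence $g^{-1}Lg\le L$ for all $g\in G$, so $L\lhd G$ and therefore $L\le K(G)=\{1\}$. You should add this step (or an equivalent one); without it the proof does not establish suitability in the sense in which the lemma is subsequently used.
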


\begin{proof}
Let $M$ be a non-trivial normal subgroup of $G$. Note that $M$ must be infinite as $K(G)=\{ 1\}$. By \cite[Theorem 1.2]{Osi16}, there exists a generating set $X$ of $G$ such that the Cayley graph $Cay(G,X)$ is hyperbolic and the action of $G$ on $Cay(G,X)$ is non-elementary. Since $M$ is normal in $G$ and infinite, the induced action of $M$ on $Cay(G,X)$ is non-elementary by \cite[Lemma 7.1]{Osi16}. Further, by \cite[Lemma 5.5]{Hull}, this implies the existence of a finite subgroup $L\le G$ normalized by $M$ such that every other finite subgroup of $G$ normalized by $M$ is contained in $L$. Since $M\lhd G$, the conjugates $g^{-1}Lg$ are also normalized by $M$ for all $g\in G$.  Therefore, $g^{-1}Lg\le L$ for all $g\in G$. This implies that $L\lhd G$ and hence $L\le K(G)=\{ 1\}$. Thus, $M$ is suitable.
\end{proof}

An element $g$ of an acylindrically hyperbolic group $G$ is called \emph{loxodromic} if there exists an acylindrical action of $G$ on a hyperbolic space $S$ such that $\langle g\rangle $ has unbounded orbits (for details and equivalent definitions, see \cite{Osi16}).

\begin{lem}[{\cite[Corollary 2.9]{DGO}}]\label{Lem:Eg}
Let $G$ be an acylindrically hyperbolic group. Every loxodromic element $g\in G$ is contained in a unique maximal virtually cyclic subgroup of $G$.
\end{lem}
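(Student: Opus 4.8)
The plan is to realise the desired subgroup explicitly as the setwise stabiliser of the pair of boundary fixed points of $g$, to prove that it is virtually cyclic (the only point where acylindricity, as opposed to mere hyperbolicity of the action, is used), and to check that it absorbs every virtually cyclic subgroup of $G$ that contains $g$.

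First, since $g$ is loxodromic, fix an acylindrical action of $G$ on a hyperbolic space $S$ for which $\langle g\rangle$ has unbounded orbits. Under an acylindrical action there are no parabolic isometries, so $g$ acts as a loxodromic isometry of $S$ and hence has exactly two fixed points $g^{+}\neq g^{-}$ on the Gromov boundary $\partial S$ (see \cite{Osi16}). Put
\[
E^{+}(g)=\{\,h\in G:\ hg^{+}=g^{+},\ hg^{-}=g^{-}\,\},\qquad
E(g)=\{\,h\in G:\ h\{g^{+},g^{-}\}=\{g^{+},g^{-}\}\,\}.
\]
Then $E^{+}(g)$ is the kernel of the action of $E(g)$ on the two-element set $\{g^{+},g^{-}\}$, so $[E(g):E^{+}(g)]\le 2$, and $\langle g\rangle\le E^{+}(g)$ because $g^{\pm}$ are by definition the fixed points of $g$. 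In particular $E(g)$ is a subgroup of $G$ containing $g$.

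Second, I would show that $E(g)$ is virtually cyclic; this is where acylindricity is essential. One option is to quote \cite[Lemma 6.5]{Osi16} (equivalently the statement in \cite{DGO}, where $E(g)$ appears as the maximal virtually cyclic subgroup hyperbolically embedded in $G$). A direct argument runs as follows: the orbit map $E^{+}(g)\to S$ is a quasi-isometric embedding onto a quasi-line along which $g$ translates; every $h\in E^{+}(g)$ translates along this quasi-line by a bounded discrepancy from an integer multiple of the translation length of $g$, so after multiplying by a suitable power of $g$ it displaces two sufficiently distant points of the quasi-line by a uniformly bounded amount; acylindricity then bounds the number of cosets of $\langle g\rangle$ in $E^{+}(g)$, whence $E^{+}(g)$, and therefore $E(g)$, is virtually cyclic.

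Finally, I would prove maximality and uniqueness. Let $V\le G$ be virtually cyclic with $g\in V$. Since $g$ has infinite order, $\langle g\rangle$ has finite index in $V$, so for each $v\in V$ the intersection $v^{-1}\langle g\rangle v\cap\langle g\rangle$ has finite index in $\langle g\rangle$; hence $v^{-1}g^{a}v=g^{b}$ for some nonzero $a,b\in\ZZ$. Conjugation by an isometry preserves the stable translation length on $S$, and the stable length of $g^{k}$ equals $|k|$ times the (positive) stable length of $g$, so $|a|=|b|$ and $v^{-1}g^{a}v=g^{\pm a}$. Thus $v$ permutes the fixed point set of the loxodromic isometry $g^{a}$ on $\partial S$, which is precisely $\{g^{+},g^{-}\}$, so $v\in E(g)$. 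Therefore $V\le E(g)$; since $E(g)$ is itself virtually cyclic and contains $g$, it is the unique maximal such subgroup, which proves the lemma. The main obstacle is the middle step: genuinely using acylindricity to force $E(g)$ to be virtually cyclic, since for an arbitrary non-elementary hyperbolic action the setwise stabiliser of a loxodromic axis can be far from virtually cyclic.
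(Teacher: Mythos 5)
The paper does not prove this lemma at all: it is imported verbatim as \cite[Corollary 2.9]{DGO}, so there is no internal argument to compare against. Your proof is, in substance, the standard argument from that reference (and from \cite{Osi16}): realize $E(g)$ as the setwise stabilizer of $\{g^+,g^-\}$, use acylindricity (via the WPD property of $g$) to show $\langle g\rangle$ has finite index in it, and use conjugation-invariance and homogeneity of the stable translation length to show any virtually cyclic overgroup of $\langle g\rangle$ preserves $\{g^+,g^-\}$. The first and third steps as you write them are complete and correct; in particular the reduction $v^{-1}g^av=g^{\pm a}$ and the identification $\mathrm{Fix}_{\partial S}(v^{-1}g^av)=v^{-1}\{g^+,g^-\}$ are exactly right. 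The only place where your sketch hides real work is the assertion that every $h\in E^{+}(g)$ moves points of the quasi-axis by a \emph{uniformly} bounded discrepancy from a translation by a power of $g$: since the orbit map $n\mapsto g^nx_0$ is only a quasi-isometric embedding, knowing that $h$ coarsely preserves distances along the quasi-line does not immediately pin down $hg^Nx_0$ to within an $N$-independent distance of $g^{N+m}x_0$, and one needs the usual thin-quadrilateral/stability argument (or the WPD formulation applied to $g^n$ for large $n$) to make this uniform before acylindricity can be invoked. You correctly identify this as the crux and offer to quote it from \cite{DGO} or \cite{Osi16}, which is legitimate; with that lemma in hand, your proof is complete and is essentially the proof the cited corollary rests on.
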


The maximal virtually cyclic subgroup of an acylindrically hyperbolic group $G$ containing a loxodromic element $g\in G$ is denoted by $E(g)$. By \cite[Corollary 5.7]{Hull}, every suitable subgroup of $G$ contains  a loxodromic element $g\in G$ such that $E(g)=\langle g\rangle$. Combining this with Lemma \ref{Lem:suit} yields the following.

\begin{lem}\label{Lem:lox}
Let $G$ be an acylindrically hyperbolic group with $K(G)=\{ 1\}$. Every non-trivial normal subgroup of $G$ contains a loxodromic element $g\in G$ such that $E(g)=\langle g\rangle$.
\end{lem}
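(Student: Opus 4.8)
The plan is to obtain this as a direct consequence of Lemma~\ref{Lem:suit} combined with the structure theory of suitable subgroups from \cite{Hull}. First I would take an arbitrary non-trivial normal subgroup $M\lhd G$. Since $K(G)=\{1\}$, Lemma~\ref{Lem:suit} applies and shows that $M$ is a suitable subgroup of $G$ in the sense of \cite[Definition 1.4]{Hull}; in particular, there is a generating set $X$ of $G$ with $Cay(G,X)$ hyperbolic and the $M$-action on it non-elementary, and $M$ normalizes no non-trivial finite normal subgroup of $G$.

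The second step is simply to invoke \cite[Corollary 5.7]{Hull}, which guarantees that any suitable subgroup of an acylindrically hyperbolic group contains a loxodromic element $g$ with $E(g)=\langle g\rangle$; applied to the suitable subgroup $M$, this produces the required element $g\in M\subseteq G$. The one bookkeeping point to address is the compatibility of the various equivalent notions of ``loxodromic element'' and of the maximal virtually cyclic subgroup $E(g)$: the element furnished by \cite{Hull} is loxodromic with respect to the specific action on $Cay(G,X)$ and has $E(g)=\langle g\rangle$ computed there, while in the present paper $E(g)$ is characterized (via Lemma~\ref{Lem:Eg}) as the unique maximal virtually cyclic subgroup containing $g$. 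Since this subgroup does not depend on the chosen acylindrical action, the two formulations agree, and I would point to the equivalences established in \cite{Osi16} to justify this.

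There is no genuine obstacle: once Lemma~\ref{Lem:suit} is in place, the statement is essentially a one-line corollary, the only real content having already been packaged into Lemma~\ref{Lem:suit} and \cite[Corollary 5.7]{Hull}.
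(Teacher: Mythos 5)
Your proposal is correct and follows exactly the paper's argument: the lemma is stated there as the immediate combination of Lemma~\ref{Lem:suit} with \cite[Corollary 5.7]{Hull}. Your extra remark on the independence of $E(g)$ from the chosen acylindrical action is a reasonable (if unstated in the paper) bookkeeping check.
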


\paragraph{2.3. Wreath-like products of groups.}
Our proof of Theorem \ref{main1} makes use of the notion of a wreath-like product of groups introduced in \cite{CIOS}. We recall the definition here.

\begin{defn}\label{Def:WR}
Let $A$, $Q$ be arbitrary groups, $I$ an abstract set, $Q\curvearrowright I$ a (left) action of $Q$ on $I$. A group $W$ is called a \emph{wreath-like product} of groups $A$ and $Q$ corresponding to the action $Q\curvearrowright I$ if $W$
is an extension of the form
\begin{equation}\label{ext}
1\longrightarrow B(W) \longrightarrow  W \stackrel{\e}\longrightarrow Q\longrightarrow 1,
\end{equation}
where
\begin{equation}\label{Eq:BWdef}
B(W)=\bigoplus_{i\in I}A_i,
\end{equation}
$A_i\cong A$ for all $i\in I$, and the action of $W$ on $B(W)$ by conjugation satisfies the rule
\begin{equation}\label{Eq:wAi}
wA_iw^{-1} = A_{\e(w)i}
\end{equation}
for all $i\in I$. The subgroup $B(W)$ is called the \emph{base} of the wreath-like product $W$ and the map $\e\colon W\to Q$ is called the \emph{canonical homomorphism} associated to the wreath-like structure of $W$. The set of all wreath-like  products of groups $A$ and $Q$ corresponding to the action $Q\curvearrowright I$ is denoted by $\WR(A, Q\curvearrowright I)$.
\end{defn}

The following lemma can be found in \cite{CIOS}; in fact, it easily follows from the description of kernels of group theoretic Dehn fillings obtained in \cite{Sun}. For an element $g$ of a group $G$, we denote by $\ll g\rr$ the smallest normal subgroup of $G$ containing $g$.

\begin{lem}[{\cite[Theorem 2.6]{CIOS}}]\label{Lem:WR}
Let $G$ be an acylindrically hyperbolic group, $g\in G$ a loxodromic element. Suppose that $d$ is a natural number such that $\langle g^d\rangle \lhd E(g)$. Then, for any sufficiently large $k\in \mathbb N$ divisible by $d$, we have $$G/[\ll g^{k}\rr, \ll g^{k}\rr]\in \WR (\ZZ, G/\ll g^k\rr \curvearrowright I),$$ where $I$ is the set of cosets $G/E(g)\ll g^k\rr$ and the action $G/\ll g^k\rr\curvearrowright I$ is by left multiplication.
\end{lem}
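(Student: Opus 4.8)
The plan is to deduce the statement from the Cohen--Lyndon-type description of the kernels of group-theoretic Dehn fillings; this is precisely the route from \cite{Sun} to \cite[Theorem~4.4]{CIOS}, and I would follow it. First I would record the facts about the filling subgroup. A loxodromic element $g$ has infinite order, $\langle g\rangle$ has finite index in the virtually cyclic group $E(g)$, and $E(g)$ is hyperbolically embedded in $G$ (this is part of the notion of a loxodromic element; see \cite{Osi16,DGO}). Given $d$ with $\langle g^d\rangle\lhd E(g)$ and any $k$ divisible by $d$, the subgroup $\langle g^k\rangle\le\langle g^d\rangle$ is characteristic in the infinite cyclic group $\langle g^d\rangle$, hence normal in $E(g)$; moreover $N_G(\langle g^k\rangle)=E(g)$, since for $x\in N_G(\langle g^k\rangle)$ the infinite subgroup $\langle g^k\rangle$ lies in $E(g)\cap xE(g)x^{-1}$, which forces $x\in E(g)$ by almost malnormality of the hyperbolically embedded subgroup $E(g)$. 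Thus $\langle g^k\rangle$ is an admissible filling subgroup of $E(g)$.

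The key input is then the Cohen--Lyndon property of the filling: by \cite{Sun}, for all sufficiently large $k$ divisible by $d$ the normal closure splits as a free product of conjugates of the filling subgroup,
$$\ll g^{k}\rr\;=\;\Ast_{t\in T}\,t\langle g^{k}\rangle t^{-1},$$
where $T$ is a left transversal of $E(g)\ll g^{k}\rr$ in $G$. Since $N_G(\langle g^k\rangle)=E(g)\subseteq E(g)\ll g^{k}\rr$, the conjugates $t\langle g^{k}\rangle t^{-1}$ occurring here are pairwise distinct and stand in canonical bijection with the coset space $I=G/E(g)\ll g^{k}\rr$, on which $G$ — and hence $Q:=G/\ll g^{k}\rr$ — acts by left multiplication.

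Now I would put $W:=G/[\ll g^{k}\rr,\ll g^{k}\rr]$, so that $1\longrightarrow B\longrightarrow W\stackrel{\e}\longrightarrow Q\longrightarrow 1$ is exact with $B:=\ll g^{k}\rr/[\ll g^{k}\rr,\ll g^{k}\rr]$. The abelianization of a free product is the direct sum of the abelianizations, and each free factor $t\langle g^{k}\rangle t^{-1}$ is infinite cyclic (as $g$ has infinite order), so $B=\bigoplus_{i\in I}A_i$ with each $A_i\cong\ZZ$, where $A_i$ denotes the image in $B$ of the free factor indexed by $i$. Finally, for $w\in G$ and $i=tE(g)\ll g^{k}\rr\in I$, conjugation in $G$ gives $w\,(t\langle g^{k}\rangle t^{-1})\,w^{-1}=(wt)\langle g^{k}\rangle(wt)^{-1}$, whose class in $I$ is $\e(w)i$; passing to $W$ and its subgroup $B$ this yields $wA_iw^{-1}=A_{\e(w)i}$ for all $w\in W$ and $i\in I$. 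This establishes \eqref{ext}--\eqref{Eq:wAi}, so $W\in\WR(\ZZ,Q\curvearrowright I)$, as claimed.

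The substantial step is the Cohen--Lyndon property itself: showing that $\ll g^{k}\rr$ is freely generated by the conjugates $t\langle g^{k}\rangle t^{-1}$ with $t$ ranging over a transversal of $E(g)\ll g^{k}\rr$, and in particular pinning the index set down precisely as $G/E(g)\ll g^{k}\rr$ rather than $G/E(g)$, is where one genuinely needs the machinery of hyperbolically embedded subgroups and of group-theoretic Dehn fillings developed in \cite{DGO,Sun}. Once that is granted, the remaining abelianization and the verification of the wreath-like axioms are routine.
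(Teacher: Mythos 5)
Your proposal is correct and follows exactly the route the paper indicates: the lemma is quoted from \cite{CIOS}, and the paper's own remark that it ``easily follows from the description of kernels of group theoretic Dehn fillings obtained in \cite{Sun}'' is precisely your argument via the Cohen--Lyndon free product decomposition of $\ll g^k\rr$, followed by abelianization and the routine verification of the wreath-like axioms. No gaps; the identification of the index set with $G/E(g)\ll g^k\rr$ and the use of $N_G(\langle g^k\rangle)=E(g)$ are handled correctly.
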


\section{Proofs of the main results} \label{Sec:AH}

\paragraph{3.1. Dynamical systems associated to normal subgroups of acylindrically hyperbolic groups.} For a group $G$, we denote by $\mathcal N(G)$ the subspace of $2^G$ consisting of all normal subgroups of $G$. It is easy to see that $\mathcal N(G)$ is closed in $2^G$ and, therefore, is a Polish space with respect to the induced topology. The base of neighborhoods of a subgroup $N\in \mathcal N(G)$ is given by the sets
\begin{equation}\label{Eq:Base}
U_G(N, F)= \{ M\lhd G \mid M\cap F = N\cap F\},
\end{equation}
where $F$ ranges in the set of all finite subsets of $G$.

We begin with auxiliary results.

\begin{lem}\label{Lem:iota}
Let $G$ be a group, $J$ an abstract set. Suppose that $G$ contains a subgroup of the form $\bigoplus_{j\in J} N_j$, where all $N_j$ are non-trivial normal subgroups of $G$. Then the map $\iota\colon 2^J\to \mathcal N(G)$ defined by $\iota(S)=\bigoplus_{j\in S} N_j$ for all $S\subseteq J$ is injective and continuous.
\end{lem}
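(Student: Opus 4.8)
The plan is to treat in turn the three assertions --- that $\iota(S)$ really is a normal subgroup of $G$ (so that $\iota$ maps into $\mathcal N(G)$), that $\iota$ is injective, and that $\iota$ is continuous --- all of which reduce to the uniqueness of decompositions in an internal direct sum. First I would note that for $S\subseteq J$ the subset $\bigoplus_{j\in S}N_j$ of the given subgroup $\bigoplus_{j\in J}N_j\le G$ is again a subgroup of $G$, and that it is normal: it is generated by the family $\{N_j\}_{j\in S}$ of normal subgroups of $G$, and a subgroup generated by normal subgroups is normal (alternatively, conjugation by $g\in G$ sends a product $\prod_j x_j$, with $x_j\in N_j$ pairwise commuting, to $\prod_j gx_jg^{-1}\in\prod_j N_j$). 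Hence $\iota(S)\in\mathcal N(G)$.

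For injectivity, I would suppose $\iota(S)=\iota(T)$ and, seeking a contradiction, pick $j\in S\setminus T$. Then $N_j\le\iota(S)=\iota(T)=\bigoplus_{i\in T}N_i$; but $T\subseteq J\setminus\{j\}$, so directness of $\bigoplus_{i\in J}N_i$ forces $N_j\cap\bigoplus_{i\in T}N_i=\{1\}$, hence $N_j=\{1\}$, contradicting the non-triviality of $N_j$. Thus $S\setminus T=\emptyset$, and by symmetry $T\setminus S=\emptyset$, so $S=T$.

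For continuity, I would show that preimages of the basic neighbourhoods $U_G(\iota(S),F)$ from \eqref{Eq:Base} contain basic neighbourhoods of $S$ in $2^J$. Fix $S\subseteq J$ and a finite $F\subseteq G$. The point is that membership of a fixed group element in $\iota(S')$ is a finitary condition on $S'$: each $g\in F\cap\bigoplus_{j\in J}N_j$ has a unique finite support $\operatorname{supp}(g)\subseteq J$, and $g\in\iota(S')$ if and only if $\operatorname{supp}(g)\subseteq S'$, while any $g\in F$ lying outside $\bigoplus_{j\in J}N_j$ belongs to no $\iota(S')$. Setting $F'=\bigcup\{\operatorname{supp}(g):g\in F\cap\bigoplus_{j\in J}N_j\}$, a finite subset of $J$, I would check that $S'\cap F'=S\cap F'$ implies $\iota(S')\cap F=\iota(S)\cap F$: for $g\in F$ with $\operatorname{supp}(g)\subseteq F'$ one has $g\in\iota(S')\iff\operatorname{supp}(g)\subseteq S'\cap F'\iff\operatorname{supp}(g)\subseteq S\cap F'\iff g\in\iota(S)$, and for $g\in F$ outside the base both memberships fail. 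This yields a basic neighbourhood of $S$ mapping into $U_G(\iota(S),F)$, so $\iota$ is continuous.

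Every step here is elementary; the only one requiring a moment's care is continuity, and even there no genuine obstacle appears once one isolates the observation that belonging to $\iota(S)$ is controlled, element by element, by a finite support. So I do not anticipate a hard part in this lemma.
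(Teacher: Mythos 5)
Your proof is correct and follows essentially the same route as the paper: normality because each $\iota(S)$ is generated by normal subgroups, injectivity from non-triviality of the $N_j$ via directness, and continuity by capturing the finitely many coordinates of $J$ needed to decide membership of elements of $F$ (your finite set of supports $F'$ plays exactly the role of the paper's finite set $K$ with $F\subseteq\langle\bigcup_{j\in K}N_j\rangle$). The only difference is cosmetic: you explicitly dispose of elements of $F$ lying outside $\bigoplus_{j\in J}N_j$, a case the paper's wording glosses over.
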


\begin{proof}
We first note that the image of every $S\subseteq J$ is indeed a normal subgroup of $G$ since so is every $N_j$. Injectivity of the map $\iota$ follows immediately from the assumption that all subgroups $N_j$ are nontrivial.

Further, fix an arbitrary $S\subseteq J$ and an arbitrary finite subset $F$ of $G$. To prove the continuity of $\iota$, it suffices to show that there exists a neighborhood $V$ of $S$ in $2^J$ such that $\iota (V) \subseteq U_G(\iota(S), F)$, where $U_G(\iota (S),F)$ is defined by (\ref{Eq:Base}). Since $F$ is finite, there exists a finite subset $K\subseteq J$ such that
\begin{equation}\label{Eq:F}
F\subseteq \left\langle \bigcup_{j\in K} N_j\right\rangle.
\end{equation}
Let $V=\{ T\subseteq I\mid T\cap K=S\cap K\}$. For every $T\in V$, we obviously have
$$
\iota(T)\cap \left\langle \bigcup_{j\in K} N_j\right\rangle = \bigoplus_{j\in T\cap K} N_j = \bigoplus_{j\in S\cap K} N_j = \iota(S)\cap \left\langle \bigcup_{j\in K} N_j\right\rangle.
$$
Combining this with (\ref{Eq:F}), we obtain $\iota (T)\cap F=\iota (S)\cap F$, i.e., $\iota (T)\in U_G(\iota(S), F)$.
\end{proof}

\begin{lem}\label{Lem:ind}
Let $G$, $H$ be any groups, $\gamma\colon G\to H$ a surjective homomorphism. For a subset $S\subseteq H$, we denote by $\gamma^{-1}(S)$ its full preimage in $G$. The map $\widehat\gamma\colon \mathcal N(H)\to \mathcal N(G)$ defined by the rule $\widehat\gamma(N)=\gamma^{-1}(N)$ for all $N\lhd H$ is continuous.
\end{lem}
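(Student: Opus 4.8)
The plan is to work directly with the explicit neighborhood bases described in \eqref{Eq:Base}, exactly as in the proof of Lemma~\ref{Lem:iota}. First I would record the only non-formal point, namely that $\widehat\gamma$ is well defined: if $N\lhd H$, then $\gamma^{-1}(N)$ is a subgroup of $G$ containing $\Ker\gamma$, and it is normal because for any $g\in G$ and $x\in\gamma^{-1}(N)$ one has $\gamma(gxg^{-1})=\gamma(g)\gamma(x)\gamma(g)^{-1}\in N$ by normality of $N$ in $H$ (here surjectivity of $\gamma$ is not even needed for well-definedness, though it is the standing hypothesis). So $\widehat\gamma$ indeed maps $\mathcal N(H)$ into $\mathcal N(G)$.

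Next I would verify continuity at an arbitrary point $N\in\mathcal N(H)$ using the sub-basic open sets $U_G(\gamma^{-1}(N),F)$ with $F\subseteq G$ finite. Given such an $F$, set $F':=\gamma(F)$, a finite subset of $H$, and let $V:=U_H(N,F')=\{M\lhd H\mid M\cap F'=N\cap F'\}$, an open neighborhood of $N$ in $\mathcal N(H)$. The claim is that $\widehat\gamma(V)\subseteq U_G(\gamma^{-1}(N),F)$, which then proves continuity.

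To check the claim, take $M\in V$ and $g\in F$. Then $g\in\gamma^{-1}(M)$ if and only if $\gamma(g)\in M$, and since $\gamma(g)\in F'$ this is equivalent to $\gamma(g)\in M\cap F'$; by the defining condition $M\cap F'=N\cap F'$ this is in turn equivalent to $\gamma(g)\in N\cap F'$, hence to $\gamma(g)\in N$, i.e.\ to $g\in\gamma^{-1}(N)$. Therefore $\gamma^{-1}(M)\cap F=\gamma^{-1}(N)\cap F$, that is, $\widehat\gamma(M)\in U_G(\gamma^{-1}(N),F)$, as required.

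There is no serious obstacle here; the argument is entirely formal once one sees that the correct neighborhood of $N$ to choose is the one cut out by the \emph{image} $\gamma(F)$ of the given finite set $F$. The only thing to be mildly careful about is to phrase continuity via the sub-basic sets of the product topology on $2^G$ restricted to $\mathcal N(G)$, exactly the bases \eqref{Eq:Base} that were set up for this purpose.
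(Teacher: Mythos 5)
Your proof is correct and follows the same route as the paper: choose the neighborhood $U_H(N,\gamma(F))$ cut out by the image of the given finite set $F$ and check it maps into $U_G(\gamma^{-1}(N),F)$. The only difference is that you spell out the well-definedness of $\widehat\gamma$ and the elementwise equivalence in more detail than the paper does.
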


\begin{proof}
Let $F$ be a finite subset of $G$ and let $N\lhd H$. For $M\lhd H$, we have $M\cap \gamma(F)=N\cap \gamma(N)$ if and only if $\gamma^{-1}(M)\cap F=\gamma^{-1}(N)\cap F$. Therefore, $\widehat \gamma $ sends $U_H(N, \gamma(F))$ to $U_G(\widehat\gamma(N), F)$ and the result follows.
\end{proof}

\begin{lem}\label{Lem:WRIRS}
Let $W\in \WR(A, Q\curvearrowright I)$, where $A\ne \{ 1\}$ and the action $Q\curvearrowright I$ is transitive. Suppose that $R$ is a normal subgroup of $W$ containing the normalizer $N_W(A_i)$ for some $i\in I$. Then there exists a continuous, injective, $W$-equivariant map $2^{W/R}\to \mathcal N(R)$.
\end{lem}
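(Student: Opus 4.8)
The plan is to apply Lemma \ref{Lem:iota} to the group $R$, using a family of non-trivial normal subgroups of $R$ indexed by $W/R$ on which $W$ acts exactly through its translation action on $W/R$. So the first task is to understand $R$ as a subgroup of $W$, and the second is to manufacture the right family of $N_j$'s.

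First I would locate $R$. Since $A\neq\{1\}$, the coordinate subgroups $A_k$ ($k\in I$) of $B(W)=\bigoplus_{i\in I}A_i$ are pairwise distinct, so the conjugation rule $wA_kw^{-1}=A_{\e(w)k}$ shows $N_W(A_i)=\e^{-1}(Q_i)$, where $Q_i:=\{q\in Q\mid qi=i\}$. In particular $B(W)=\Ker\e\le N_W(A_i)\le R$, so $R=\e^{-1}(\bar R)$ where $\bar R:=\e(R)$; and because $R\lhd W$ and $\e$ is surjective, $\bar R$ is normal in $Q$ and contains $Q_i$. Hence $\e$ induces a $W$-equivariant identification $W/R\cong Q/\bar R$ (the $W$-action being left translation via $\e$), and, crucially, since $\bar R\lhd Q$, the subgroup $R$ acts trivially on $W/R$.

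Next I would build a block decomposition of $I$. Transitivity of $Q\curvearrowright I$ gives a $Q$-equivariant identification $I\cong Q/Q_i$, and as $Q_i\le\bar R$ there is a canonical $Q$-equivariant surjection $\pi\colon I\cong Q/Q_i\to Q/\bar R\cong W/R$. For $j\in W/R$ put $I_j:=\pi^{-1}(j)$, so $I=\bigsqcup_{j\in W/R}I_j$. Because $\bar R\lhd Q$ acts trivially on $Q/\bar R$, each block $I_j$ is invariant under $R$, and for every $w\in W$ one has $w\cdot I_j=I_{w\cdot j}$ (with $W$ acting on $I$ through $\e$). Now define $N_j:=\bigoplus_{i\in I_j}A_i\le B(W)\le R$ for $j\in W/R$. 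Each $N_j$ is non-trivial (as $A\neq\{1\}$) and normal in $R$ — normality follows from $rA_ir^{-1}=A_{\e(r)i}$ together with the $R$-invariance of $I_j$ — and $\bigoplus_{j\in W/R}N_j=B(W)\le R$. So Lemma \ref{Lem:iota}, applied with $G=R$ and $J=W/R$, yields a continuous injective map $\iota\colon 2^{W/R}\to\mathcal N(R)$ given by $\iota(S)=\bigoplus_{j\in S}N_j$. Finally, $W$-equivariance is a direct computation: for $w\in W$ and $S\subseteq W/R$,
$$
w\,\iota(S)\,w^{-1}=\bigoplus_{j\in S}wN_jw^{-1}=\bigoplus_{j\in S}\ \bigoplus_{i\in I_j}A_{\e(w)i}=\bigoplus_{j\in S}\ \bigoplus_{i'\in I_{w\cdot j}}A_{i'}=\bigoplus_{j\in S}N_{w\cdot j}=\iota(w\cdot S).
$$

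The one step that I expect to require genuine care is the verification that $R$ — equivalently $\bar R=\e(R)$ — fixes each block $I_j$ setwise, so that the $N_j$ are really normal in $R$ and not merely permuted among themselves. This is exactly where normality of $R$ in $W$ (hence of $\bar R$ in $Q$) enters; without it the same recipe would only produce subgroups of $W$, not of $R$, and the target $\mathcal N(R)$ would be wrong.
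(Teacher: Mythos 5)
Your proposal is correct and follows essentially the same route as the paper: both construct the decomposition $B(W)=\bigoplus_{j\in W/R}N_j$ into non-trivial subgroups normal in $R$ (your blocks $I_j=\pi^{-1}(j)$ give exactly the paper's $N_{wR}=\bigoplus_{r\in wR}A_{\e(r)i}$) and then invoke Lemma \ref{Lem:iota} with $G=R$, $J=W/R$, checking $W$-equivariance via $wA_kw^{-1}=A_{\e(w)k}$. The only cosmetic difference is that you route the argument through the identification $W/R\cong Q/\bar R$ and the block decomposition of $I$, whereas the paper works directly with cosets $wR$; the substance is identical.
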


\begin{proof}
Throughout the proof, we use the notation and terminology introduced in Definition~\ref{Def:WR}. Fix $i\in I$ such that $N_W(A_i)\le R$. Obviously, we have $B(W)\le N_W(A_i)\le R$. For every $wR\in W/R$, we define
$$
N_{wR}=\left\langle \bigcup_{r\in wR} A_{\e(r)i}\right\rangle = \bigoplus_{r\in wR} A_{\e(r)i} \le B(W),
$$
where $\e\colon W\to Q$ is the canonical homomorphism. Using (\ref{Eq:wAi}), we obtain
\begin{equation}\label{tDt}
tN_{wR}t^{-1}= \left\langle \bigcup_{r\in wR} tA_{\e(r)i}t^{-1}\right\rangle = \left\langle \bigcup_{r\in wR} A_{\e(tr)i}\right\rangle
=\left\langle \bigcup_{s\in twR} A_{\e(s)i}\right\rangle =N_{twR}.
\end{equation}
If $t\in R$, then $twR=w(w^{-1}tw)R=wR$ for every $w\in W$. Hence, $N_{wR}\lhd R$ for all $wR\in W/R$.

Suppose that $A_{\e(u)i}=A_{\e(v)i}$ for some $u,v\in W$. Using (\ref{Eq:wAi}) again, we obtain $u^{-1}v\in N_W(A_i)\le R$ and, therefore, $uR=vR$. This implies that $N_{uR}$ and $N_{vR}$ are generated by disjoint sets of direct summands of $B(W)$ whenever $uR\ne vR$. Thus, we have
\begin{equation}\label{Eq:BW}
B(W)=\bigoplus_{wR\in W/R} N_{wR}.
\end{equation}

Let $J=W/R$. Consider the injective, continuous map $\iota \colon 2^{W/R}\to \mathcal N(R)$ defined as in Lemma \ref{Lem:iota}. For any $t\in W$ and any $S\subseteq J=W/R$, we have
$$
\iota(tS) = \left\langle   \bigcup_{wR\in tS} N_{wR}\right\rangle = \left\langle \bigcup_{uR\in S} N_{tu R}\right\rangle = \left\langle \bigcup_{uR\in S} tN_{uR}t^{-1}\right\rangle= t\iota(S)t^{-1}
$$
by (\ref{tDt}). Thus, $\iota$ is $W$-equivariant.
\end{proof}

If $M$ is a normal subgroup of a group $G$, then the action of $G$ on $M$ by conjugation induces a continuous action of $G/M$ on $\mathcal N(M)$. We are now ready to prove the main result of this section. Recall that $K(G)$ denotes the maximal finite normal subgroup of an acylindrically hyperbolic group $G$.

\begin{thm}\label{Thm:main}
Let $G$ be an acylindrically hyperbolic group. For every infinite normal subgroup $M$ of $G$ containing $K(G)$, there exists a continuous, injective, $G$-equivariant map $2^{G/M}\to \mathcal N(M)$.
\end{thm}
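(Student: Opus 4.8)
The plan is to reduce to the wreath-like product situation handled by Lemma~\ref{Lem:WRIRS}. First I would dispose of the case $K(G)\ne\{1\}$ by passing to the quotient $\bar G=G/K(G)$: it is acylindrically hyperbolic with $K(\bar G)=\{1\}$, and $\bar M=M/K(G)$ is an infinite normal subgroup. A $\bar G$-equivariant continuous injection $2^{\bar G/\bar M}\to\mathcal N(\bar M)$ pulls back to the desired map for $G,M$, since $\bar G/\bar M\cong G/M$ and $\mathcal N(\bar M)$ embeds $G$-equivariantly and continuously into $\mathcal N(M)$ via $\widehat\gamma$ from Lemma~\ref{Lem:ind} (applied to the restriction $M\twoheadrightarrow\bar M$); so I may assume $K(G)=\{1\}$.

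**The core construction.** Now with $K(G)=\{1\}$, by Lemma~\ref{Lem:lox} the non-trivial normal subgroup $M$ contains a loxodromic element $g$ with $E(g)=\langle g\rangle$; then $d=1$ works in Lemma~\ref{Lem:WR}, so for a suitable large $k$ we get $G/[\ll g^k\rr,\ll g^k\rr]\in\WR(\ZZ,Q\curvearrowright I)$ where $Q=G/\ll g^k\rr$ and $I=G/E(g)\ll g^k\rr$, a transitive $Q$-action. Write $W=G/[\ll g^k\rr,\ll g^k\rr]$ and let $\pi\colon G\to W$ be the quotient map; note $\ll g^k\rr\subseteq M$ since $g\in M\lhd G$, so $\pi(M)$ is a normal subgroup of $W$, and $B(W)\subseteq\pi(M)$ because the base is $\ll g^k\rr/[\ll g^k\rr,\ll g^k\rr]$. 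The key point I need is that $\pi(M)$ contains $N_W(A_i)$ for some $i\in I$. Since $N_W(A_i)$ is the preimage in $W$ of the $Q$-stabilizer of $i$, this amounts to checking that $\epsilon(\pi(M))$ (with $\epsilon\colon W\to Q$ the canonical homomorphism, so $\epsilon\circ\pi$ is the quotient $G\to Q=G/\ll g^k\rr$) lies in the stabilizer of the coset $i_0=E(g)\ll g^k\rr\in I$ — equivalently, that $M\subseteq E(g)\ll g^k\rr$ modulo $\ll g^k\rr$. But that is false in general; the right fix is to observe instead that we only need $N_W(A_{i_0})\le\pi(M)$ as subgroups, and $N_W(A_{i_0})=\epsilon^{-1}(\mathrm{Stab}_Q(i_0))$ while $\mathrm{Stab}_Q(i_0)$ is the image of $E(g)\ll g^k\rr$; so I need $E(g)\ll g^k\rr\subseteq M\cdot\ll g^k\rr$, which holds because $E(g)=\langle g\rangle\le M$ and $\ll g^k\rr\le M$, hence $E(g)\ll g^k\rr\le M$. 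Therefore $N_W(A_{i_0})=\epsilon^{-1}(\mathrm{Stab}_Q(i_0))\subseteq\pi(M)$, so Lemma~\ref{Lem:WRIRS} with $R=\pi(M)$ gives a continuous, injective, $W$-equivariant map $2^{W/\pi(M)}\to\mathcal N(\pi(M))$.

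**Transporting back to $G$ and $M$.** It remains to move this from $W$ down to $G$. We have $W/\pi(M)\cong G/M$ $G$-equivariantly (both quotients are by the preimage of $M$, and $[\ll g^k\rr,\ll g^k\rr]\le M$), which identifies $2^{W/\pi(M)}$ with $2^{G/M}$ as $G$-spaces. On the target side, $\pi$ restricts to a surjection $M\twoheadrightarrow\pi(M)$ with kernel $[\ll g^k\rr,\ll g^k\rr]$; by Lemma~\ref{Lem:ind} the induced map $\widehat{\pi|_M}\colon\mathcal N(\pi(M))\to\mathcal N(M)$ is continuous, and it is injective since taking full preimages is injective on subgroups containing the kernel — which is automatic here because $[\ll g^k\rr,\ll g^k\rr]\le\ll g^k\rr\le\pi^{-1}(N)$ for every $N\lhd\pi(M)$, as $\ll g^k\rr$ maps onto $B(W)\le\pi(M)$ and $B(W)$ need not lie in $N$, so I actually need a mild variant. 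The clean way: restrict attention to the image of $\iota$, i.e.\ to normal subgroups of $\pi(M)$ of the form $\bigoplus_{j\in S}N_{jR}$, all of which contain $[\ll g^k\rr,\ll g^k\rr]$ trivially — no, rather, compose with $\widehat{\pi|_M}$ directly and note injectivity of the \emph{composite} $2^{G/M}\to\mathcal N(M)$ follows from injectivity of $\iota$ together with the fact that distinct $S$ give $\iota(S)$ generated by distinct sets of direct summands, whose full preimages in $M$ remain distinct because $[\ll g^k\rr,\ll g^k\rr]$ is a proper normal subgroup inside every $\pi^{-1}(\iota(S))$ and the latter are pairwise distinct as their images in $W$ are. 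Finally, one checks $G$-equivariance of $\widehat{\pi|_M}$ with respect to the conjugation actions of $G/M$, which is immediate from $\pi$ being $G$-equivariant. Composing the identification $2^{G/M}\cong 2^{W/\pi(M)}$, the map from Lemma~\ref{Lem:WRIRS}, and $\widehat{\pi|_M}$ yields the required continuous, injective, $G$-equivariant map $2^{G/M}\to\mathcal N(M)$.

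**Main obstacle.** The delicate point is the normalizer condition $N_W(A_{i_0})\le\pi(M)$ feeding into Lemma~\ref{Lem:WRIRS}: it forces the specific choice $i_0=E(g)\ll g^k\rr$ and uses crucially that $E(g)=\langle g\rangle\le M$, which is exactly why Lemma~\ref{Lem:lox} (hence $K(G)=\{1\}$, hence the initial reduction) is needed. The secondary nuisance is bookkeeping the injectivity of $\widehat{\pi|_M}$ on the relevant subspace, since $\ker(\pi|_M)=[\ll g^k\rr,\ll g^k\rr]$ is not contained in every normal subgroup of $\pi(M)$; this is handled by restricting to the image of $\iota$ as above rather than claiming injectivity of $\widehat{\pi|_M}$ wholesale.
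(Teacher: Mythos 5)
Your proof is correct and follows essentially the same route as the paper: reduce to $K(G)=\{1\}$, produce $g\in M$ with $E(g)=\langle g\rangle$ via Lemma~\ref{Lem:lox}, pass to the wreath-like quotient $W$ of Lemma~\ref{Lem:WR}, verify $N_W(A_{i_0})\le\pi(M)$ from $E(g)\ll g^k\rr=\langle g\rangle\ll g^k\rr\le M$, and transport back using Lemmas~\ref{Lem:WRIRS} and~\ref{Lem:ind}. The only place you over-complicate is the injectivity of $\widehat{\pi|_M}$: since $\pi|_M$ is surjective onto $\pi(M)$, the full-preimage map $N\mapsto(\pi|_M)^{-1}(N)$ is injective on all of $\mathcal N(\pi(M))$ (a left inverse is $N\mapsto\pi(N)$), so no restriction to the image of $\iota$ is needed.
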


\begin{proof}
The group $\overline{G}=G/K(G)$ is acylindrically hyperbolic and has trivial finite radical by \cite[Lemma 3.9]{MO}. Since $M$ is infinite, $\overline{M}=M/K(G)$ is a non-trivial normal subgroup of $\overline{G}$. By Lemma \ref{Lem:lox}, there exists a loxodromic element $g\in \overline{M}$ such that $E(g)=\langle g\rangle$. Further, by Lemma \ref{Lem:WR}, there exists $k\in \mathbb N$ such that the group $W=\overline{G}/[\ll g^{k}\rr, \ll g^{k}\rr]$ belongs to $\WR (\ZZ, \overline{G}/\ll g^k\rr \curvearrowright I)$, where $I=\overline{G}/E(g)\ll g^k\rr$ and the action $\overline{G}/\ll g^k\rr\curvearrowright I$ is by left multiplication.

For brevity, we denote the subgroup $[\ll g^{k}\rr, \ll g^{k}\rr] $ by $D$. Let $R=\overline{M}/D\lhd W$ and let $i=E(g)\ll g^k\rr$ (we think of $i$ as a coset of $E(g)\ll g^k\rr $ in $\overline G$ and thus $i\in I$). In what follows, we employ the notation introduced in Definition \ref{Def:WR}. In particular, we denote by $\e\colon W\to G/\ll g^k\rr$ the canonical homomorphism.

For every $w\in N_W(A_i)$, the element $\e(w)\in G/\ll g^k\rr$ stabilizes $i$ by (\ref{Eq:wAi}). This means that $\e(w)\in E(g)\ll g^k\rr/\ll g^k\rr$. Since $\overline M\lhd \overline G$ and $g\in \overline M$, we obtain
$$
w\in E(g)\ll g^k\rr/D =\langle g\rangle \ll g^k\rr /D\le \overline{M}/D =R.
$$
Thus, $N_W(A_i)\le R$. Applying Lemma \ref{Lem:WRIRS} to the group $W$ and the subgroup $R$, we obtain a continuous, injective, $W$-equivariant map $\iota\colon 2^{W/R}\to \mathcal N(R)$.

Let $\gamma\colon G\to W$ be the composition of the natural homomorphisms $$G\to \overline G=G/K(G)\to W=\overline{G}/D.$$ We endow $W/R$ and $\mathcal N(R)$ with the action of $G$ induced by $\gamma$ and the action of $W$ on the corresponding set. Since $K(G)\le M$ and $g\in \overline M$, we have $\Ker \gamma \le M$. Therefore, $G/M\cong \gamma(G)/\gamma(M)\cong W/R$. This isomorphism induces a $G$-equivariant homeomorphism $\kappa \colon 2^{G/M}\to 2^{W/R}$. Applying Lemma \ref{Lem:ind} to the restriction of $\gamma$ to $M$, we obtain a continuous, injective map  $\widehat\gamma\colon \mathcal N(R) \to \mathcal N(M)$, which is also $G$-equivariant. Thus, the composition $\widehat\gamma\circ\iota \circ \kappa$ has all the required properties.
\end{proof}

\begin{rem}\label{Rem:G/M}
Since the action of $M$ on $2^{G/M}$ and $\mathcal N(M)$ is trivial, we can think of both $2^{G/M}$ and $\mathcal N(M)$ and $G/M$-sets. In these settings, the map constructed in Theorem \ref{Thm:main} is $G/M$-equivariant.
\end{rem}

\paragraph{$\Out(F_n)$-invariant probability measures on $\G_n$.}
Recall that the group $\Aut(F_n)$ is acylindrically hyperbolic for any $n\ge 2$ \cite{GH}. It is also well-known and easy to prove that $\Aut(F_n)$ has no non-trivial finite normal subgroups. Indeed, let $K$ be a finite normal subgroup of $\Aut(F_n)$. We identify $F_n$ with $\Inn(F_n)$ and think of it as a subgroup of $\Aut(F_n)$. Consider $P=\langle F_n, K\rangle $. Since both $F_n$ and $K$ are normal in $P$ and $F_n\cap K=\{ 1\}$, we have $P=F_n\times K$. Hence, the action of $K\le \Aut(F_n)$ on $F_n$ is trivial and we obtain $K=\{ 1\}$.

\begin{proof}[Proof of Theorem \ref{main1}]
Apply Theorem \ref{Thm:main} (and Remark \ref{Rem:G/M}) to the group $G=\Aut(F_n)$ and its normal subgroup $M=F_n$.
\end{proof}

Corollaries \ref{Cor1} and \ref{Cor2} are derived from Theorem \ref{main1} by considering the push-forward and pull-back measures, respectively.

\begin{proof}[Proof of Corollary \ref{Cor1}]
Let $\mu $ be a non-atomic, $\Out(F_n)$-invariant, mixing probability measure on $2^{\Out(F_n)}$. Let $f\colon 2^{\Out(F_n)}\to \G_n$ be a continuous, injective, $\Out(F_n)$-equivariant map provided by Theorem \ref{main1}. For every Borel subset $A\subseteq \G_n$ we define $\nu(A)=\mu(f^{-1}(A))$. It is easy to see that $\nu$ is a non-atomic, $\Out(F_n)$-invariant, mixing probability measure on $\G_n$. It remains to note that there exist $2^{\aleph_0}$ pairwise distinct non-atomic, $\Out(F_n)$-invariant, mixing probability measures on $2^{\Out(F_n)}$ by Lemma \ref{Lem:K} (a) and distinct measures on $2^{\Out(F_n)}$ induce distinct measures on $\G_n$.
\end{proof}

\begin{proof}[Proof of Corollary \ref{Cor2}]
The natural homomorphism $F_n\to F_n/[F_n, F_n]\cong \ZZ^n$ induces an epimorphism $\Out(F_n)\to GL(n,\ZZ)$. It is well-known that $GL(n,Z)$ contains a non-cyclic free subgroup whenever $n\ge 2$. Thus, $GL(n,\ZZ)$ is non-amenable and, therefore, so is $\Out(F_n)$ for $n\ge 2$. By Lemma \ref{Lem:K} (b), there exists a non-empty, closed, $\Out(F_n)$-invariant subset $C\subseteq 2^{\Out(F_n)}$ that admits no $\Out(F_n)$-invariant probability measure. Note that the set $C$ is compact being a closed subset of the compact space $2^{\Out(F_n)}$.

Let $f\colon 2^{\Out(F_n)}\to \G_n$ be a continuous, injective, $G$-equivariant map provided by Theorem \ref{main1} and let $D=f(C)$. Since $f$ is continuous and $\Out(F_n)$-equivariant, $D$ is also compact and $\Out(F_n)$-invariant. Thus, $D$ is a subsystem of $\Out(F_n)\curvearrowright \G_n$. Suppose that there exists an $\Out(F_n)$-invariant probability measure $\nu$ on $D$. Then the rule $\mu(B)=\nu (f(B))$ for all Borel subsets $B\subseteq 2^{\Out(F_n)}$ induces an $\Out(F_n)$-invariant probability measure $\mu$ on $2^{\Out(F_n)}$, which leads to a contradiction.
\end{proof}

\section{Model-theoretic aspects of $\Out(F_n)$-invariant ergodic measures on $\G_n$}\label{Sec:MT}

\paragraph{4.1. Elementary equivalence of generic groups.} Let $\mathcal L$ denote the language of groups. That is, $\mathcal L$ is the first order language with the signature $\{ \cdot, ^{-1}, 1\}$, where $\cdot$ (respectively, $^{-1}$) is a binary (respectively, unary) operation and $1$ is a constant interpreted in the obvious way. Recall that $\L_{\omega_1, \omega}$ is the infinitary version of $\L$,  where countable conjunctions and disjunctions (but only finite sequences of quantifiers) are allowed. For details, we refer the reader to \cite{Mar16}.

The expressive power of $\L_{\omega_1, \omega}$ is much greater than that of $\mathcal L$. In fact, most algebraic, geometric, and even analytic properties of groups can be expressed by $\L_{\omega_1, \omega}$-sentences. Examples include finiteness, solvability, hyperbolicity, amenability, property (T) of Kazhdan, exponential and subexponential growth, etc. (see \cite{Osi21} and references therein). Note that none of these properties can be defined by first order sentences.

Given any countable set $F$ of sentences in $\L_{\omega_1, \omega}$, we say that a group $G$ is \emph{$F$-equivalent} to a group $H$ and write $G\equiv_F H$ if $G$ and $H$ satisfy the same sentences from $F$; that is, for every $\sigma\in F$, we have $G\models \sigma $ if and only if $H\models \sigma$. If $F$ is the set of all first order sentences, $\equiv _F$ becomes the familiar \emph{elementary equivalence} relation.

The following proposition is a standard application of ergodicity.

\begin{prop}\label{Prop:EE}
Let $\mu$ be an $\Out(F_n)$-invariant, ergodic measure on $\G_n$. For any countable set $F$ of sentences in $\L_{\omega_1, \omega}$, there exists a Borel subset $A_F\subseteq \G_n$ of measure $\mu(A_F)=1$ such that, for any $(G,X), (H,Y)\in A_F$, we have $G\equiv_F H$. In particular, there exists a Borel subset $A\subseteq \G_n$ such that $\mu(A)=1$ and, for any $(G,X), (H,Y)\in A$, the groups $G$ and $H$ are elementarily equivalent.
\end{prop}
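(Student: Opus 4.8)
The plan is to exploit the fact that for each fixed sentence $\sigma \in \L_{\omega_1,\omega}$, the set of marked groups satisfying $\sigma$ is an $\Out(F_n)$-invariant Borel subset of $\G_n$, so ergodicity forces it to have measure $0$ or $1$. First I would record the invariance: if $(G,X)$ and $(H,Y)$ lie in the same $\Out(F_n)$-orbit, then $G \cong H$ as abstract groups (this is stated in the introduction), and since the truth of any $\L_{\omega_1,\omega}$-sentence depends only on the isomorphism type of the underlying group, the set $B_\sigma = \{(G,X) \in \G_n : G \models \sigma\}$ is $\Out(F_n)$-invariant. The main technical point is that $B_\sigma$ is Borel; I would justify this by the standard fact that, identifying $\G_n$ with a space of normal subgroups $N \lhd F_n$ (so $G = F_n/N$), the satisfaction relation $F_n/N \models \sigma$ is Borel in $N$. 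This is proved by induction on the complexity of $\sigma$: atomic formulas with parameters in $F_n/N$ reduce to membership of words in $N$, which is clopen; Boolean combinations (including countable ones, which is where $\omega_1$-conjunctions enter and why we only get Borel rather than clopen) and existential quantifiers over the countable group $F_n/N$ preserve Borelness. One must phrase the induction in terms of formulas with a tuple of free variables interpreted by a sequence of elements of $F_n/N$, i.e. by a sequence of elements of $F_n$; I would be slightly careful to set this up so the quantifier step is a countable union over $F_n$.

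Granting that each $B_\sigma$ is Borel and $\Out(F_n)$-invariant, ergodicity of $\mu$ gives $\mu(B_\sigma) \in \{0,1\}$ for every $\sigma$. For a fixed countable set $F$ of $\L_{\omega_1,\omega}$-sentences, set
$$
A_F = \bigcap_{\sigma \in F,\ \mu(B_\sigma)=1} B_\sigma \ \cap\ \bigcap_{\sigma \in F,\ \mu(B_\sigma)=0} (\G_n \setminus B_\sigma).
$$
This is a countable intersection of $\mu$-conull Borel sets, hence Borel with $\mu(A_F)=1$. By construction, for every $\sigma \in F$ either every member of $A_F$ satisfies $\sigma$ or none does; hence any two $(G,X),(H,Y) \in A_F$ satisfy exactly the same sentences from $F$, i.e. $G \equiv_F H$.

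For the "in particular" clause I would take $F = \L$, the set of all first-order sentences (a countable set, since the language of groups is finite), and note that first-order formulas are a special case of $\L_{\omega_1,\omega}$-formulas, so the Borelness claim applies; then $A := A_{\L}$ is Borel of full measure and any two marked groups in $A$ are elementarily equivalent. The only genuinely nontrivial ingredient is the Borel-measurability of the satisfaction relation $N \mapsto (F_n/N \models \sigma)$ for $\sigma \in \L_{\omega_1,\omega}$; everything else is the bookkeeping above. I expect the write-up to cite or sketch this measurability statement (it is folklore, and appears e.g. in work on invariant random subgroups and on $\G_n$ as a Borel space) rather than reprove it in full.
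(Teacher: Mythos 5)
Your proposal is correct and follows essentially the same route as the paper: both show that $\{(G,X)\in\G_n \mid G\models\sigma\}$ is Borel (the paper cites \cite[Proposition 5.1]{Osi21} rather than sketching the induction) and $\Out(F_n)$-invariant, apply ergodicity to get measure $0$ or $1$, and intersect the resulting conull sets over the countable family $F$. Your explicit intersection with the complements of the null sets is in fact a slightly cleaner bookkeeping than the paper's, which passes through $\lnot\sigma$ and tacitly treats $F$ as closed under negation.
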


\begin{proof}
Fix some $n\in \NN$ and an ergodic, $\Out(F_n)$-invariant probability measure $\mu$ on $\G_n$.
For every sentence $\sigma \in \L_{\omega_1, \omega}$, we define its set of models
$$
\Mod_n(\sigma)=\{ (G,X)\in \G_n \mid G\models \sigma\}.
$$
It is well-known that $\Mod_n(\sigma)$ is a Borel subset of $\G_n$ for every $\sigma \in \L_{\omega_1, \omega}$. For instance, this follows immediately from \cite[Proposition 5.1]{Osi21}.

For any $\alpha \in \Out(F_n)$ and any $(G,X)\in \G_n$, we have $\alpha(G,X)=(G, X^\prime)$ for some $X^\prime$. Therefore, $\Mod_n(\sigma)$ is $\Out(F_n)$-invariant. By ergodicity, we have
\begin{equation}\label{Eq:01}
\mu (\Mod_n(\sigma))\in \{ 0, 1\}.
\end{equation}

Let $T$ denote the set of all sentences $\sigma \in F$ such that $\mu (\Mod_n(\sigma)) =1$ and let
$$
A_F=\{ (G,X)\in \G_n \mid  G\models \sigma \textrm{ for all } \sigma \in T\} = \bigcap_{\sigma\in T}\Mod_n(\sigma).
$$
Since $F$ is countable, so is $T$ and, therefore, $\mu (A_F)=1$. We claim that, for any $(G,X)\in A_F$ and any $\sigma \in F$, we have $G\models \sigma$ if and only if $\sigma \in T$.

Indeed, if $\sigma \in T$, then $G\models \sigma$ by the definition of $A_F$. If $\sigma \notin T$, then $\mu(\Mod_n(\sigma))=0$ by (\ref{Eq:01}) and $\mu(\Mod_n(\lnot\sigma))=\mu (\G_n \setminus \Mod_n(\sigma))= 1$. Therefore, $\lnot \sigma \in T$ and  $G\models \lnot\sigma$; equivalently, $G$ does not satisfy $\sigma$. Thus, all groups whose markings belong to $A_F$ are $F$-equivalent.
\end{proof}

\begin{rem}\label{Rem:c}
If $\mu$ is non-atomic, every Borel subset $A\subseteq \G_n$ of measure $1$ must have the cardinality of the continuum. (Recall that the continuum hypothesis holds for Borel sets as proved by Alexandrov and Hausdorff, see \cite[Theorem 3.16]{Kech}.) Since the isomorphism class $[G]_n=\{(H,Y)\in \G_n\mid H\cong G\}$ of every finitely generated group $G$ in $\G_n$ is at most countable, such a subset $A$ must contain $2^{\aleph_0}$ pairwise non-isomorphic, elementarily equivalent groups.
\end{rem}

\paragraph{4.2. An application.} Recall that the \emph{support} of a measure $\mu$ on a topological space $X$, denoted by $supp (\mu)$, is the set of all points $x\in X$ such that every open neighborhood of $x$ has positive measure. For example, the support of the product measure on  $2^\NN$ corresponding to any non-trivial probability distribution on $\{ 0,1\}$ is the whole space $2^\NN$.

Proposition \ref{Prop:EE} imposes strong restrictions on the possible supports of $\Out(F_n)$-invariant, ergodic probability measures on $\G_n$. We illustrate this by deriving the following.

\begin{cor}\label{Cor:H}
Let $\mathcal H_n$ denote the subset of $\G_n$ consisting of all pairs $(G,X)$ such that $G$ is non-elementary hyperbolic. For any $n\ge 2$, there is no $\Out(F_n)$-invariant, ergodic probability measure $\mu$ on $\G_n$ whose support contains $\mathcal H_n$.
\end{cor}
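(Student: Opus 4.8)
The plan is to use Proposition \ref{Prop:EE} together with the expressive power of $\L_{\omega_1,\omega}$ to derive a contradiction from the existence of such a measure. Suppose $\mu$ is an $\Out(F_n)$-invariant, ergodic probability measure on $\G_n$ with $\mathcal H_n\subseteq supp(\mu)$. By Proposition \ref{Prop:EE}, there is a Borel set $A\subseteq\G_n$ with $\mu(A)=1$ such that any two groups with markings in $A$ are elementarily equivalent; more strongly, applying the proposition to a suitably chosen countable set $F$ of $\L_{\omega_1,\omega}$-sentences, we can ensure that all groups with markings in $A$ agree on every sentence in $F$. The key point is that the property ``being hyperbolic and not virtually cyclic'' — or at least a property that separates infinitely many non-isomorphic hyperbolic groups — is expressible, or can be approximated, by a countable family of $\L_{\omega_1,\omega}$-sentences (cf.\ the discussion after the statement of Proposition \ref{Prop:EE}, citing \cite{Osi21}).

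The first step is to observe that since $\mu(A)=1$ and $A$ is Borel, $A$ is dense in $supp(\mu)$, hence in particular $A\cap\mathcal H_n$ is dense in $\mathcal H_n$ (every nonempty open subset of $\mathcal H_n$ meets $supp(\mu)$, and a measure-one Borel set meets every positive-measure open set). So $A$ contains markings of arbitrarily many pairwise non-isomorphic hyperbolic, non-virtually-cyclic groups. The second step is to exhibit two such groups that are \emph{not} $\L_{\omega_1,\omega}$-equivalent with respect to an appropriate countable $F$ — indeed, that are not even elementarily equivalent, or that differ on some specific $\L_{\omega_1,\omega}$-sentence such as one encoding a numerical invariant (e.g.\ the isomorphism type of the finite subgroup structure, the abelianization, torsion, or — using the fact that there are $2^{\aleph_0}$ pairwise non-elementarily-equivalent finitely presented hyperbolic groups — any invariant distinguishing them). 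The contradiction is then immediate: two markings lying in $A$ cannot be simultaneously $F$-equivalent and witnessed to disagree on a sentence of $F$.

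Concretely, I would argue as follows. Take $F$ to be any countable set of $\L_{\omega_1,\omega}$-sentences rich enough that the corresponding $F$-equivalence classes partition the hyperbolic, non-virtually-cyclic groups into more than one class that each meets a dense open subset of $\mathcal H_n$ — for instance, $F$ could include the first-order theory together with a sentence $\sigma$ asserting ``$G$ has an element of order $2$'' (or any first-order sentence with both models and non-models densely represented among hyperbolic groups). Since $\mathcal H_n\subseteq supp(\mu)$ and $\Mod_n(\sigma)\cap\mathcal H_n$ and $\Mod_n(\lnot\sigma)\cap\mathcal H_n$ are both nonempty (e.g.\ surface groups vs.\ free products with a $\ZZ/2$ factor, suitably marked) and open in $\mathcal H_n$, each has positive $\mu$-measure; hence $\mu(\Mod_n(\sigma))>0$ and $\mu(\Mod_n(\lnot\sigma))>0$, contradicting the dichotomy $\mu(\Mod_n(\sigma))\in\{0,1\}$ from \eqref{Eq:01}.

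The main obstacle is the \emph{density} bookkeeping: I must check that the chosen separating sentence $\sigma$ has the property that both $\Mod_n(\sigma)$ and its complement have nonempty interior inside $\mathcal H_n$, i.e.\ that there are open subsets of $\G_n$ consisting entirely of hyperbolic non-virtually-cyclic groups all satisfying $\sigma$, and likewise for $\lnot\sigma$. This requires knowing that hyperbolicity (and non-virtual-cyclicity) is an open condition on suitable pieces of $\G_n$ — which is delicate since hyperbolicity is not open in general — or, alternatively, replacing the open-set argument by the weaker statement that $\mathcal H_n$ is not contained in a single $\mu$-null fiber, using that $\mathcal H_n$ carries positive measure (being in the support) and meets at least two $F$-classes. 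I would resolve this by picking $\sigma$ to be a property, such as admitting a quotient isomorphic to a fixed finite group, that is both generic and co-generic among the hyperbolic groups appearing in a fixed open piece, so that the required open sets are visibly nonempty; this is the step where the argument needs the most care, and it is essentially the only nontrivial input beyond Proposition \ref{Prop:EE} and ergodicity.
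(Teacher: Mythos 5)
Your overall strategy is the same as the paper's: use the zero--one law $\mu(\Mod_n(\sigma))\in\{0,1\}$ from Proposition \ref{Prop:EE} for a single separating sentence $\sigma$, and derive a contradiction by finding two points of $\mathcal H_n\subseteq supp(\mu)$ whose neighborhoods land in $\Mod_n(\sigma)$ and $\Mod_n(\lnot\sigma)$ respectively, so that both sets have positive measure. However, the step you yourself flag as ``the step where the argument needs the most care'' is exactly the mathematical content of the proof, and none of your concrete candidates resolves it. For $\sigma=$ ``$G$ has an element of order $2$'', the set $\Mod_n(\sigma)$ is open (a countable union of clopen conditions $w\ne 1,\ w^2=1$), but $\Mod_n(\lnot\sigma)$ is then closed with no evident interior: a neighborhood of a finitely presented marked group consists of its quotients, and quotients of a $2$-torsion-free group can acquire $2$-torsion, so no surface group or torsion-free hyperbolic group visibly has a neighborhood inside $\Mod_n(\lnot\sigma)$. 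The same defect afflicts ``admits a quotient isomorphic to a fixed finite group'': that condition is a finite union of closed sets in $\G_n$, and its negation again has no obvious interior. In each case one side of your dichotomy is only known to be nonempty, not to contain an open subset of $\G_n$ meeting the support, and positive measure cannot be concluded from relative openness in $\mathcal H_n$ (which need not be open in $\G_n$). A related slip: you do not actually need open sets ``consisting entirely of hyperbolic non-virtually-cyclic groups''; you only need open subsets of $\G_n$ on which $\sigma$ (resp.\ $\lnot\sigma$) holds identically and which contain at least one point of $\mathcal H_n$.

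The paper fills this gap with a specific construction. It takes $\sigma$ to be the $\L_{\omega_1,\omega}$-sentence ``$G=[G,G]$''. On one side, a $2$-generated non-elementary hyperbolic group $H_2$ with trivial abelianization works because perfectness passes to all quotients, hence holds on an entire neighborhood $V$ of $(H_2,X_2)$. On the other side, it builds a hyperbolic group $H_1$ as an extension of a $C'(1/6)$ group by a normal $\ZZ/3\ZZ=\langle c\rangle$ inverted by the generators; in any quotient where the image of $c$ remains nontrivial (an open condition), the conjugation action on $\langle c\rangle$ yields a nontrivial homomorphism to $\ZZ/2\ZZ$, so $\lnot\sigma$ holds on an entire neighborhood $U$ of $(H_1,X_1)$. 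Producing such a pair of groups (or an equivalent device making both $\Mod_n(\sigma)$ and $\Mod_n(\lnot\sigma)$ contain open neighborhoods of points of $\mathcal H_n$) is the missing ingredient in your proposal; without it the argument does not close.
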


\begin{rem}
It is well-known and easy to prove that $\mathcal H_n$ has no isolated points (see, for example, \cite{Osi21}). Our interest in Corollary~\ref{Cor:H} stems from the fact that all measures constructed in the previous section are supported on $\overline{\mathcal H}_n$ for appropriate $n\in \NN$. The proof of this fact is rather technical and we do not provide it here. The main idea generalizes the well-known observation that the ordinary wreath product $\ZZ {\rm \,wr\,}\ZZ$ can be approximated by groups $\ZZ/k\ZZ {\rm \,wr\,}\ZZ$ as $k\to \infty$, and the latter groups are limits of hyperbolic groups obtained by ``truncating" the standard presentation (see, for example, Lemmas 3.1 and 3.2 in \cite{Osi01}). Furthermore, the supports of our measures are ``very small" (in particular, proper) subsets of $\overline{\mathcal H}_n$. Corollary~\ref{Cor:H} shows that, in a certain sense, this is unavoidable.
\end{rem}

\begin{proof}[Proof of Corollary \ref{Cor:H}]
We will need two auxiliary hyperbolic groups $H_1$ and $H_2$ defined as follows.
Let
$$Q=\langle a, b\mid aba^2b\ldots a^{100}b=1\rangle.$$
It is straightforward to verify that the latter presentation satisfies the $C^\prime(1/6)$ condition and, therefore, $Q$ is hyperbolic. Furthermore, $Q$ is not virtually cyclic as it surjects on $\mathbb Z/2\mathbb Z\oplus \mathbb Z/2\mathbb Z$.
Let $H_1$ is the following extension of $\ZZ/3\ZZ=\langle c\mid c^3=1\rangle$ by $Q$:
$$
H_1=\langle a, b, c \mid c=aba^2b\ldots a^{100}b,\; c^3=1,\; a^{-1}ca=b^{-1}cb=c^{-1}\rangle .
$$
Since hyperbolicity is invariant under taking extensions with finite kernel, $H_1$ is also hyperbolic. Note that $H_1$ is generated by two elements, namely $a$ and $b$.

Further, let $H_2$ be a $2$-generated non-elementary hyperbolic group with trivial abelianization. Such a group is easy to define explicitly by a presentation satisfying the $C^\prime(1/6)$ condition; alternatively, such a group exists by \cite[Corollary 3.24]{CIOS23}.

Fix some $n\ge 2$ and some generating sets $X_1$ and $X_2$ of $H_1$ and $H_2$, respectively, so that $(H_i, X_i)\in \G_n$ for $i=1,2$. Since $H_1$ is finitely presented, there exists an open neighborhood $U$ of $(H_1, X_1)$ such that, for any $(G,X)\in U$, $G$ is a quotient of $H_1$ (see, for example, \cite{Gri84} or \cite[Lemma 2.3]{CG}) and the images of $c$ and $c^{-1}$ in $G$ are distinct. The action of the image of $a$ on the image of $\langle c\rangle$ in $G$ yields a non-trivial homomorphism $G\to \ZZ/2\ZZ$.

Similarly, there is an open neighborhood $V$ of $(H_2, X_2)$ such that, for every $(H,Y)\in V$, $H$ is a quotient of $H_2$. Obviously, every such $H$ has trivial abelianization since so does $H_2$.

Although having trivial abelianization is not a first order property, it can be axiomatized in $\L_{\omega_1, \omega}$. Specifically,  let $\sigma$ denote the sentence
$$
\forall\, g\;\,  \bigvee\limits_{k=1}^\infty \Big(\exists\, a_1\, \ldots \, \exists\, a_{2k}\;\, g=[a_1, a_2]\cdots [a_{2k-1}, a_{2k}]\Big),
$$
where $[a,b]$ is the abbreviation of $a^{-1}b^{-1}ab$. Since the commutant of every group is generated by commutators, a group $G$ satisfies $\sigma$ is and only if $G=[G,G]$.

Let $\mu$ be an $\Out(F_n)$-invariant, ergodic probability measure on the space $\G_n$. By Proposition \ref{Prop:EE} applied to $F=\{ \sigma\}$, there exists a subset $A_F\subseteq \G_n$ of measure $\mu(A_F)=1$ such that all marked groups from $A_F$ simultaneously satisfy $\sigma$ or $\lnot \sigma$. If both $(H_1, X_1)$ and $(H_2, X_2)$ belong to $supp(\mu)$, we have $\mu(U)>0$ and $\mu(V)>0$. Hence, $U\cap A_F\ne \emptyset$ and $V\cap A_F\ne \emptyset$, which yields a contradiction. Thus, $(H_1, X_1)$ and $(H_2, X_2)$ cannot simultaneously belong to $supp(\mu)$.
\end{proof}

Our proof of Corollary \ref{Cor:H} essentially relies on the existence of torsion in $H_1$ and $H_2$. We do not know the answer to the following.

\begin{q}\label{q}
Let $n\ge 2$. Does there exist a non-atomic, $\Out(F_n)$-invariant, ergodic probability measure $\mu$ on $\G_n$ such that $supp(\mu)$ contains all non-cyclic, torsion free, $n$-generated, hyperbolic marked groups?
\end{q}

It would also be interesting to know whether there exist non-atomic, $\Out(F_n)$-invariant, ergodic probability measures on $\G_n$ concentrated on other natural classes of groups such as solvable groups, groups of intermediate growth, etc. Grigorchuck and Kropholler indicated to the author that the answer to this question might be positive for solvable groups of derived length $3$.

\vspace{1cm}

\noindent \textbf{Denis Osin: } Department of Mathematics, Vanderbilt University, Nashville 37240, U.S.A.\\
E-mail: \emph{denis.v.osin@vanderbilt.edu}

\end{document}